\documentclass[12pt]{article}

\usepackage{fullpage}
\usepackage{amsfonts,amsmath,epsf,epsfig,bbm}
\usepackage{amsthm}

\usepackage{tikz}

\bibliographystyle{plain}

%\thanks{This research was supported by the Norwegian Research Council, 
%project ``Machine Teaching for LAI''.}} 
%\author{Hans Ulrich Simon}

%\keywords{no-clash teaching, recursive teaching, results of the Sauer-Shelah type, tournaments, pro\-ba\-bi\-li\-stic method, Johnson graphs} %TODO mandatory; please add comma-separated list of keywords

\newtheorem{theorem}{Theorem}[section]
\newtheorem{definition}[theorem]{Definition}

\newtheorem{corollary}[theorem]{Corollary}
\newtheorem{remark}[theorem]{Remark}

\newtheorem{example}[theorem]{Example}

\newcommand{\VCD}{\mathrm{VCD}}
\newcommand{\RTD}{\mathrm{RTD}}

\newcommand{\sm}{\setminus}
\newcommand{\eset}{\emptyset}
\newcommand{\cM}{{\mathcal M}}
\newcommand{\cC}{{\mathcal C}}
\newcommand{\cS}{{\mathcal S}}

\newcommand{\seq}{\subseteq}
\newcommand{\ra}{\rightarrow}

\newcommand{\dund}{\Leftrightarrow}
\newcommand{\impl}{\Rightarrow}

\let\vec\mathbf

\DeclareMathOperator{\SMN}{SMN}

\DeclareMathOperator{\GMN}{GMN}
\DeclareMathOperator{\dom}{dom}
\DeclareMathOperator{\cost}{cost}
\DeclareMathOperator{\ACN}{AMN}
\DeclareMathOperator{\AN}{AMN}
\DeclareMathOperator{\STD}{STD}

\title{The Hierarchy of Saturating Matching Numbers }

\author{
Hans U. Simon\thanks{Faculty of Mathematics, Ruhr-University Bochum, Germany. E-mail: {\tt hans.simon@rub.de}.}
\and
Jan Arne Telle\thanks{Department of Informatics, University of Bergen, Norway. E-mail: {\tt jan.arne.telle@uib.no}.
}}

\begin{document}

\maketitle

\begin{abstract}
In this paper, we study three matching problems all of which
came up quite recently in the field of machine teaching.
The cost of a matching is defined in such a way that,
for some formal model of teaching, it equals (or bounds)
the number of labeled examples needed to solve a given
teaching task. We show how the cost parameters associated
with these problems depend on each other and how they are
related to other (more or less) well known combinatorial
parameters.
\end{abstract}

\section{Introduction} \label{sec:introduction}

A binary concept class $C$ contains classification rules
(called concepts) which split the instances in its domain
$X = \dom(C)$ into positive examples (labeled $1$) and
negative examples (labeled $0$). Formally, a concept
can be identified with a function $f:c \ra \{0,1\}$.

This paper is inspired by recent work in the field of machine teaching.
In machine teaching, the goal is to find a collection $S$ of helpful
examples, called a \emph{teaching set for the unknown target
concept $c$ in the class $C$}, which enables a learner (or a learning
algorithm) to ``infer'' $c$ from $S$.
The three matching problems that we discuss in this paper are
related to teaching models that came up quite
recently~\cite{FH-OT2022,ST2024,FGHHT2024,MSSZ2022}. We will look at
these problems from a purely combinatorial perspective but,
as a service for the interested reader, we will indicate
the connection to machine teaching by giving a reference
to the relevant literature at the appropriate time.

\section{Preliminaries} \label{sec:preliminaries}

We first recall a few definitions from learning theory.
Let $X$ be a finite set and let $2^X$ denote the set of all
functions $c:X \ra \{0,1\}$. A family $C \seq 2^X$ is called
a \emph{concept class over the domain $X = \dom(C)$}.
A set $S = \{(x_1,b_1),\ldots, (x_m,b_m)\}$ with $m$ distinct
elements $(x_i,b_i) \in X\times\{0,1\}$ is called a
\emph{labeled sample of size $|S| = m$}. A concept $c \in C$
is said to be \emph{consistent with $S$} if $c(x_i) = b_i$
for $i = 1,\ldots,m$. A labeled sample $S$ is said to be
\emph{realizable by $C$} if $C$ contains a concept that 
is consistent with $S$. A \emph{$C$-saturating matching} is
a mapping $M$ that assigns to each $c \in C$ a labeled sample $M(c)$
such that the following holds: first $c$ is consistent with $M(c)$;
second $c \neq c' \in C$ implies that $M(c) \neq M(c')$.
The \emph{cost of $M$} is defined as the size of the largest labeled
sample that is assigned by $M$ to one of the concepts in $C$,
i.e., $\cost(M) = \max_{c \in C}|M(c)|$. 
A $C$-saturating matching $M'$ is called a
\emph{direct improvement} of another $C$-saturating matching $M$
if $M'$ differs from $M$ only on a single concept $c$ 
and $|M'(c)| < |M(c)|$.
A $C$-saturating matching $M$ is called \emph{greedy} if it
does not admit for direct improvements. 

Suppose that $P$ is a procedure that, given a concept class $C$, 
returns a $C$-saturating matching $M$. $P$ is said to be \emph{greedy}
if it inspects the concepts one by one and, for each fixed 
concept $c \in C$, makes an assignment $M(c) = S$ where $S$ is a 
labeled sample with the following properties:
\begin{enumerate}
\item
The concept $c$ is consistent with $S$.
\item
The labeled sample $S$ is still available, i.e., it has not already
been assigned to a different concept.
\item
Among the samples which meet the preceding two criteria, $S$ is
of lowest cost.
\end{enumerate}
A greedy procedure has some degrees of freedom. First, we have
left open in which order the concepts are inspected. Second,
for each concept, $P$ may have the choice between different
labeled samples of the same size. But we could make the output
of a greedy procedure unique by committing to a linear order
among the concepts, and by committing to a linear extension
of the partial size-ordering among the labeled samples.
Without such commitments, $P$ may have several possible outputs.
Greedy $C$-saturating matchings and greedy procedures for their
computation are related as follows:
\begin{remark}
A $C$-saturating matching is greedy iff it is a possible output
of a greedy procedure.
\end{remark}

In the sequel, the symbol $X$ will always denote the domain
of a concept class $C$. The latter will be clear from context
We are now prepared to define the parameters we are interested in.
\begin{description}
\item[SMN:] 
The \emph{saturating matching number of $C$}, denoted by $\SMN(C)$, 
is the cost of a cheapest $C$-saturating matching. This parameter 
was introduced first in \cite{ST2024}. It characterizes the
sample complexity of MAP- resp.~MLE-based teaching. See~\cite{ST2024}
for details.
\item[SMN$'$:]
Here comes a relative of the saturating matching number. 
We define $\SMN'(C)$ as the smallest number $d$ such that the number
of $C$-realizable samples of size at most $d$ is greater than or equal
to $|C|$.
\item[AMN:] 
The \emph{antichain matching number of $C$}, denoted by $\ACN(C)$, is the
cost of a cheapest $C$-saturating matching that has the antichain 
property. The antichain matching number was introduced first 
in~\cite{MSSZ2022}. As explained in~\cite{MSSZ2022}, it lower-bounds
the sample complexity in all models where the teaching map has the
antichain property (or can be modified so as to have this property 
without increasing its cost).
\item[AMN$'$:]
We are also interested in the following relative of the antichain
matching number.
We define $\ACN'(C)$ as the smallest number $d$ such that
the $C$-realizable samples of size at most $d$ contain an antichain 
of size $|C|$.
\item[GMN:]
The parameter $\GMN(C)$, introduced first in~\cite{FGHHT2024}
in connection with learning representations of concepts, is defined 
as the largest possible cost of a greedy $C$-saturating matching. 
It is easy to see that the smallest possible cost of a greedy
$C$-saturating matching equals $\SMN(C)$.
\item[GMN$'$:]
We define $\GMN'(C)$ as the smallest number $d$ such  
that $\sum_{i=0}^{d}\binom{|X|}{i} \ge |C|$.
\item[VCD:]
Points $x_1\ldots,x_r \in X$ are said to be \emph{shattered by $C$}
if, for every choice of $b_1,\ldots,b_r \in \{0,1\}$, the
labeled sample $\{(x_1,b_1),\ldots,(x_r,b_r)\}$ is $C$-realizable.
The size of the largest set that is shattered by $C$ is called
the \emph{VC-dimension of $C$} and denoted by $\VCD(C)$.
The VC-dimension was first introduced in~\cite{VC1971}.
\item[RTD:]
By $\RTD(C)$, we denote the so-called \emph{recursive teaching
dimension of $C$}. We will not require a formal definition of the
RTD in this paper, but the interested reader may 
consult~\cite{ZLHZ2011,DFSZ2014} for a definition.
\item[STD$_{min}$:]
A sequence $\cM = (M_k)_{k\ge0}$ of $C$-saturating matchings 
is called a \emph{subset teaching sequence for $C$} if the following hold:
\begin{enumerate}
\item
$M_0(c) = \{(x,c(x)): x \in X\}$ for all $c \in C$.
\item
$M_{k+1}(c) \seq M_k(c)$ for all $k\ge0$ and all $c \in C$.
\item
$M_{k+1}(c) \not\seq M_k(c')$ for all $k\ge0$ and all $c' \neq c \in C$.
\end{enumerate}
Let 
\[ 
k^*(\cM) = \min\{k| \forall k' \ge k, c \in C: M_{k'}(c) = M_k(c)\}
\enspace .
\] 
We define $\cost(\cM$) as the cost of the matching $M_{k^*(\cM)}$. 
Finally, we set
\[ 
\STD_{min}(C) = \min\{\cost(\cM): \mbox{$\cM$ is a subset teaching
sequence for $C$}\} \enspace .
\]
This parameter was introduced in~\cite{MSSZ2022}. It is a variant 
of Balbach's~\cite{B2008} subset-teaching dimension.
\end{description}

The paper is organized as follows.
In Section~\ref{sec:first-look} and~\ref{sec:kn-family}, we verify 
several $\le$-relations between the combinatorial parameters that 
we are interested in. The resulting parameter hierarchy,
called SMN-GMN-AMN hierarchy in what follows, is  visualized 
in Fig.~\ref{fig:hierarchy} below. 
Class- and domain-monotonicity are desirable properties for parameters
related to teaching. In Section~\ref{sec:monotonicity}, we determine 
which of our combinatorial parameters enjoy these properties.
An evaluation of the parameters on the powerset takes place
in Section~\ref{sec:powerset}. In Section~\ref{sec:additivity},
we determine which of our combinatorial parameters are additive
(resp.~sub- or super-additive) on the free combination of two
concept classes. We show finally in Section~\ref{sec:final-look} 
that all inequalities of the SMN-GMN-AMN hierarchy are occasionally 
strict so that this hierarchy is proper.

\begin{figure}[hbt] 
        \begin{center}
\begin{tikzpicture}

        %\node[draw,circle,label=above{0,4}];

        \tikzset{every node/.style={draw,rectangle}}
        %\tikzset{label/.style={red}}

        \def\x{0} \def\y{0}

        \node (a) at (\x,\y) {$\min\{\VCD,\RTD\}$};
        \node (b) at (\x-1.3,\y-1) {$\GMN'$};
        \node (c) at (\x-1.3,\y-2) {$\GMN$};
        \node (d) at (\x-1.3,\y-3) {$\SMN$};
        \node (e) at (\x,\y-4) {$\SMN'$};

        \node (f) at (\x+1.3,\y-2) {$\AN$};
        \node (g) at (\x+1.3,\y-3) {$\AN'$};
        \node (h) at (\x+1.3,\y-1) {$\STD_{min}'$};

        \draw[<-] (a) to (h);
        \draw[<-] (a) to (b);
        \draw[<-] (b) to (c);
        \draw[<-] (c) to (d);
        \draw[<-] (d) to (e);
        \draw[<-] (f) to (g);
        \draw[<-] (g) to (e);
        \draw[<-] (b) to (g);
        \draw[<-] (f) to (d);
        \draw[<-] (h) to (f);

\end{tikzpicture}
        \end{center}
        \caption{The parameter hierarchy: an arc $Y \ra Z$ represents
         a relation $Y(C) \le Z(C)$ that is valid for each concept
         class $C$ and that is occasionally strict (for special choices 
         of $C$).  \label{fig:hierarchy} }
\end{figure}
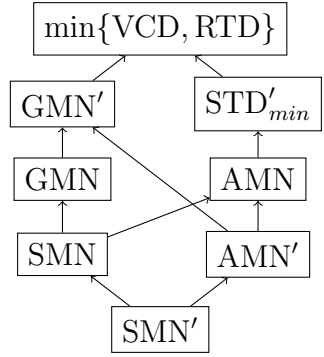

\section{A First Look at the SMN-GMN-AMN Hierarchy} \label{sec:first-look}

Here is a list of the (more or less) obvious relations
among the parameters of the SMN-GMN-AMN hierarchy:
\begin{enumerate}
\item $\SMN(C) \le \GMN(C)$ and $\SMN(C) \le \ACN(C)$. \\
{\bf Reason:} No matching can be cheaper than a cheapest one.
%to show that $\SMN(C) = \GMNB(C)$. A greedy matching cannot have smaller 
%cost than an optimal one. Hence $\SMN(C) \le \GMNB(C)$. Given an optimal 
%matching $M$ and a linear ordering $S_1,S_2,\ldots$ of the samples (which 
%is an extension of the partial ordering by cardinality). Let $m = |C|$ 
%and let $i(1) < i(2) <\ldots< i(m)$ be chosen such 
%that $M(C) = \{S_{i(1)},S_{i(2)},\ldots,S_{i(m)}\}$. Renumber the
%concepts in $C$ such that $M(c_j) = S_{i(j)}$.
%Then $c_1,\ldots,c_m$ is an ordering such that the cost of the resulting 
%greedy matching is not higher than the cost of $M$. Indeed, if we compute 
%the greedy matching which results from the chosen ordering for the concepts, 
%then each $c_j$ will obtain as a matching-partner either $M(c_j)$ or even 
%a sample preceding $M(c_j)$ in the given linear ordering of the samples.
%We conclude from this discussion that $\GMNB(C) \le \SMN(C)$.
\item $\SMN'(C) \le \SMN(C)$. \\
{\bf Reason:}
Let $M$ be a $C$-saturating matching of cost $\SMN(C)$.
Then the samples $M(c)$ with $c \in C$ are $C$-realizable. 
It follows that the number of $C$-realizable samples of size 
at most $\SMN(C)$ is greater than or equal to $|C|$. The 
definition of $\SMN'(C)$ now implies that $\SMN'(C) \le \SMN'(C)$.
\item $\ACN'(C) \le \ACN(C)$. \\
{\bf Reason:} 
Let $M$ be a $C$-saturating matching of cost $\ACN(C)$ that has the 
antichain property. Then the samples $M(c)$ with $c \in C$ are 
$C$-realizable and form an antichain of size $|C|$. 
The definition of $\ACN'(C)$ now implies that $\ACN'(C) \le \ACN(C)$.
\item $\GMN(C) \le \GMN'(C)$. \\
{\bf Reason:}
Set $d := \GMN'(C)$ and $X := \dom(C)$. It follows 
that $\sum_{i=0}^{d}\binom{|X|}{i} \ge |C|$.
Let $M$ be a $C$-saturating matching of cost $d' > d$. Pick a concept $c_0 \in C$
such that $|M(c_0)| = d'$. For cardinality reasons, there must exist an
unlabeled sample $\{x_1,\ldots,x_r\}$ of size $r \le d$ such that no $c \in C$
satisfies $M(c) = \{(x_1,c(x_1)),\ldots,(x_r,c(x_r))\}$. Let $M'$ be
the matching that equals $M$ except 
for $M'(c_0) =   \{(x_1,c_0(x_1)),\ldots,(x_r,c_0(x_r))\}$.
Then $M'$ is a $C$-saturating matching that directly improves on $M$.
We have therefore shown that no $C$-saturating matching of cost exceeding $d$
is greedy. Hence $\GMN(C) \le \GMN'(C)$.
\item $\SMN'(C) \le \ACN'(C)$ \\
{\bf Reason:}
If the $C$-realizable samples of size at most $d$ contain an antichain
of size $|C|$ then, obviously, the number of $C$-realizable samples of size at 
most $d$ cannot be smaller than $|C|$. The inequality $\SMN'(C) \le \AN'(C)$ 
is therefore evident from the definitions of $\SMN'(C)$ and of $\AN'(C)$.
\item 
$\AN(C) \le \STD_{min}(C)$ and $\STD_{min}(C) \le \min\{\VCD(C) , \RTD(C)\}$. \\
{\bf Reason:}
It is obvious from the definition of a subset teaching 
sequence $\cM = (M_k)_{k\ge0}$ (and it was noted in~\cite{MSSZ2022}
already) that each matching $M_k$ is $C$-saturating and has the 
antichain property. Hence $\AN(C) \le \STD_{min}(C)$. It was shown 
furthermore in~\cite{MSSZ2022} that $\STD_{min}(C)$ cannot exceed $\VCD(C)$ 
or $\RTD(C)$. 
\item $\GMN'(C) \le \min\{\VCD(C),\RTD(C)\}$. \\
{\bf Reason:}
According to Sauer's Lemma~\cite{S1972,Sh1972}, we have 
that $\sum_{i=0}^{\VCD(C)}\binom{|X|}{i} \ge |C|$.
This inequality holds as well with $\RTD(C)$ in place 
of $\VCD(C)$~\cite{SSYZ2014}. The inequalities $\GMN'(C) \le \VCD(C)$ 
and $\GMN'(C) \le \RTD(C)$ are now immediate from the definition
of $\GMN'(C)$.
\item
If $\SMN'(C) \le |X|/5$, then $\GMN'(C) \le 2\cdot\SMN'(C)$. \\
{\bf Reason:}
For sake of brevity, set $n = |X|$. Let $d$ be the smallest number 
subject to $2^d\cdot\sum_{i=0}^{d}\binom{n}{i} \ge |C|$. Since 
the number of $C$-realizable samples of size at most $d$ is bounded from above 
by $\sum_{i=0}^{d}2^i\cdot\binom{n}{i} \le 2^d\cdot\sum_{i=0}^{d}\binom{n}{i}$,
it follows from the definition of $\SMN'(C)$ that $\SMN'(C) \ge d$.
It suffices therefore to show $\GMN'(C) \le 2d$. We will show that 
\begin{equation} \label{eq1:binomial-coefficients}
\sum_{i=0}^{2d}\binom{n}{i} \ge 2^d\cdot\sum_{i=0}^{d}\binom{n}{i} 
\enspace . 
\end{equation}
Thanks to $2^d\cdot\sum_{i=0}^{d}\binom{n}{i} \ge |C|$, this would imply 
that $\GMN'(C) \le 2d$. We verify~(\ref{eq1:binomial-coefficients})
by proving that
\[ 
\forall i=0,\ldots,d: \frac{\binom{n}{d+i}}{\binom{n}{i}} \ge 2^d \enspace .
\]
Expanding the binomial coefficients, we obtain (after some cancellation)
\[
\frac{\binom{n}{d+i}}{\binom{n}{i}} = 
\frac{(n-i)(n-i-1)\ldots(n-i-d+1)}{(d+i)(d+i-1)\ldots(i+1)}
= \frac{n-i}{d+i} \cdot \frac{n-i-1}{d+i-1} \cdot\ldots\cdot \frac{n-i-d+1}{i+1}
\enspace .
\]
Given our assumption that $d \le n/5$, each of the $d$ factors in the latter 
product has a value of at least $2$, which accomplishes the proof.
\item If $\SMN'(C) \le |X|/5$, then $\GMN(C) \le 2\cdot\SMN(C)$. \\
{\bf Reason:}
This follows from inequalities that have been verified already:
$\GMN(C) \le \GMN'(C) \le 2\cdot\SMN'(C) \le 2\cdot\SMN(C)$. 
\end{enumerate}

\section{Bounding AMN$'$ by GMN$'$} \label{sec:kn-family}

The verification of the inquality $\AN'(C) \le \GMN'(C)$ is 
still missing and will be given in the course of this section.

The \emph{$(k,n)$-family} is defined as the family of concept 
classes $C$ with $|C| = k$ and $|\dom(C)| = n$. We denote this family
by $\cC_{k,n}$. 

\begin{example}
For $k \ge 2$ and $n \ge \log k$, we denote by $C_{k,n}$ the 
concept class in $\cC_{k,n}$ given by
\[ 
\forall i = 0,1,\ldots,k-1 , j = 0,1,\ldots,n-1:
i = \sum_{j=0}^{n-1}c_i(x_j)2^j \enspace .
\]
In other words, $C_{k,n} = \{c_0,c_1,\ldots,c_{k-1}\}$,
$\dom(C_{k,n}) = \{x_0,x_1,\ldots,x_{n-1}\}$
and $c_i(x_j)$ is the $j$-th bit in the binary representation of $i$.
\end{example}

\noindent
It is easy to characterize the $C_{k,n}$-realizable samples:
\begin{remark} \label{rem:minimum-realizability}
A labeled sample $S$ over $\dom(C_{k,n})$ is $C_{k,n}$-realizable
iff $\sum_{j:(x_j,1) \in S}2^j \le k-1$.
\end{remark}

According to the following quite recent result, 
the class $C_{k,n} \in \cC_{k,n}$ has the smallest number
of realizable samples of a given size: 

\begin{theorem}[Main Theorem in~\cite{SHKT2024}] \label{th:hart-general}
For each $1 \le d \le n$, the number of $C$-realizable samples 
of size $d$, with $C$ ranging over all concept classes in $\cC_{k,n}$,
is minimized by setting $C = C_{k,n}$.\footnote{The main theorem 
in~\cite{SHKT2024} is not stated in this form, but it is equivalent 
to what is claimed in Theorem~\ref{th:hart-general}.}
\end{theorem}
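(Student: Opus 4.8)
The plan is to pass to the geometric picture. Identify a concept with the $n$-bit string of its values, so that $C \in \cC_{k,n}$ becomes a set of $k$ points in the cube $\{0,1\}^n$, and identify a labeled sample of size $d$ with a codimension-$d$ subcube (the $d$ labeled coordinates are fixed, the remaining $n-d$ are free). Under this dictionary a sample is $C$-realizable exactly when the corresponding subcube contains a point of $C$. Writing $f_d(C)$ for the number of codimension-$d$ subcubes meeting $C$, the theorem says that $f_d$ is minimized over $\cC_{k,n}$ by the initial segment $C_{k,n}$; equivalently, since the total number $\binom{n}{d}2^d$ of such subcubes is fixed, $C_{k,n}$ maximizes the number of codimension-$d$ subcubes that \emph{avoid} $C$. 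I would prove this by induction on $n$.

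The induction splits on the highest-weight coordinate $x_{n-1}$. Let $C_0$ and $C_1$ be the subclasses obtained by conditioning on $x_{n-1}=0$ resp. $x_{n-1}=1$ and projecting into $\{0,1\}^{n-1}$, and let $U=C_0\cup C_1$ be the projection of all of $C$. Classifying a subcube according to whether it leaves $x_{n-1}$ free, fixes it to $0$, or fixes it to $1$ gives the recursion
\[
 f_d^{(n)}(C)\;=\;f_d^{(n-1)}(U)+f_{d-1}^{(n-1)}(C_0)+f_{d-1}^{(n-1)}(C_1)\enspace .
\]
Put $k_0=|C_0|\ge k_1=|C_1|$ (so $k_0+k_1=k$ and $k_0\le 2^{n-1}$), and abbreviate $F(m,n,d)=f_d(C_{m,n})$. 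Applying the induction hypothesis termwise, using that $F(\cdot,n-1,d)$ is nondecreasing (since $C_{m,n-1}\seq C_{m+1,n-1}$) together with $|U|\ge\max(k_0,k_1)=k_0$, and then applying the recursion to $C_{k_0,n}$ (legal because $k_0\le 2^{n-1}$), I would obtain
\[
 f_d^{(n)}(C)\;\ge\;F(k_0,n-1,d)+F(k_0,n-1,d-1)+F(k_1,n-1,d-1)\;=\;F(k_0,n,d)+F(k_1,n-1,d-1)\enspace .
\]
It therefore suffices to prove the purely numerical \emph{superadditivity} inequality $F(k_0,n,d)+F(k_1,n-1,d-1)\ge F(k,n,d)$ for all splits with $k_0\ge k_1$ and $k_0\le 2^{n-1}$, and to check that equality holds at the split $k_0=\min(k,2^{n-1})$, which is precisely the one realized by $C_{k,n}$ (a short computation with the recursion for $F$ confirms the equality in both regimes $k\le 2^{n-1}$ and $k>2^{n-1}$).

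For this last inequality I would use the explicit increments of $F$. By Remark~\ref{rem:minimum-realizability}, $F(m,n,d)$ counts the size-$d$ samples whose $1$-labeled coordinates have weight-sum at most $m-1$, so the marginal $F(m+1,n,d)-F(m,n,d)$ counts those of weight-sum exactly $m$; since binary representations are unique, this forces the $1$-labeled positions to be the support of $m$ and equals $\binom{n-s(m)}{\,d-s(m)\,}$, where $s(m)$ is the number of $1$'s in the binary expansion of $m$. Telescoping the two sides and cancelling via Pascal's identity reduces the target inequality to
\[
 \sum_{i=0}^{k_1-1}\binom{n-1-s(i)}{\,n-d\,}\;\ge\;\sum_{i=0}^{k_1-1}\binom{n-s(k_0+i)}{\,n-d\,}\enspace .
\]
The hard part lies exactly here: this does \emph{not} hold term by term, because $s(\cdot)$ behaves erratically under the shift $i\mapsto k_0+i$ (carries can make $s(k_0+i)<s(i)$), so it must be established in aggregate. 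I would prove it by comparing the popcount statistics of the initial interval $\{0,\dots,k_1-1\}$ with those of the shifted interval $\{k_0,\dots,k_0+k_1-1\}$ — the guiding fact being that initial intervals of integers minimize such popcount statistics when $k_0\ge k_1$ — and transferring this domination through the convex decreasing map $t\mapsto\binom{n-t}{n-d}$ by a rearrangement/majorization argument. The base cases $d=0$, $d=n$, and $n=0$ are immediate, so settling this popcount-binomial inequality is what completes the induction and is the genuine obstacle.
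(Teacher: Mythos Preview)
The paper does not give a proof of Theorem~\ref{th:hart-general}: it is quoted (and reformulated, as the footnote indicates) from~\cite{SHKT2024} and then used as a black box in the proof of Theorem~\ref{th:challenging-relation}. There is therefore no in-paper argument to compare your attempt against.

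Taken on its own, your reduction is sound as far as it goes. The subcube translation, the splitting recursion
\(f_d^{(n)}(C)=f_d^{(n-1)}(U)+f_{d-1}^{(n-1)}(C_0)+f_{d-1}^{(n-1)}(C_1)\),
the application of the induction hypothesis, and the telescoping via the increment formula
\(F(m{+}1,n,d)-F(m,n,d)=\binom{n-s(m)}{d-s(m)}\)
are all correct, and they do reduce the superadditivity step to
\[
\sum_{i=0}^{k_1-1}\binom{n-1-s(i)}{\,n-d\,}\ \ge\ \sum_{i=0}^{k_1-1}\binom{n-s(k_0+i)}{\,n-d\,}\qquad(k_0\ge k_1)\enspace .
\]
But this inequality is exactly where the content lies, and you do not prove it. The assertion that ``initial intervals of integers minimize such popcount statistics'' is stated as a guiding fact rather than established, and the promised ``rearrangement/majorization argument'' is named but not carried out. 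Since, as you yourself observe, the inequality fails term by term (carries can make \(s(k_0+i)<1+s(i)\)), a genuine combinatorial argument is still required here; until that lemma is actually proved, the proposal is a clean reduction rather than a proof.
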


Since the minimizer $C_{k,n}$ of the number of realizable samples
of size $d$ is the same for all possible choices of $d$, we immediately
obtain the following result:

\begin{corollary} \label{cor:smn-maximizer}
$\SMN'(C)$ with $C$ ranging over all concept classes in $\cC_{k,n}$,
is maximized by setting $C = C_{k,n}$.
\end{corollary}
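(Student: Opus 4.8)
The plan is to lift the per-size minimization of Theorem~\ref{th:hart-general} to cumulative counts and then read off the claim directly from the definition of $\SMN'$. For $C \in \cC_{k,n}$ and $d \ge 0$, let $N_C(d)$ denote the number of $C$-realizable samples of size exactly $d$, and let $\ol{N}_C(d) = \sum_{d'=0}^{d} N_C(d')$ denote the number of $C$-realizable samples of size at most $d$. First I would dispose of the boundary sizes. For $d = 0$ the empty sample is realizable by every (nonempty) class, so $N_C(0) = 1 = N_{C_{k,n}}(0)$; for $d > n$ no realizable sample of size $d$ exists over an $n$-element domain, so $N_C(d) = 0$ for all $C$. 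In the remaining range $1 \le d \le n$, Theorem~\ref{th:hart-general} gives exactly $N_{C_{k,n}}(d) \le N_C(d)$ for every $C \in \cC_{k,n}$. Thus $N_{C_{k,n}}(d) \le N_C(d)$ holds for all $d \ge 0$, and summing over $d' = 0,\ldots,d$ yields
\[
\ol{N}_{C_{k,n}}(d) \le \ol{N}_C(d) \quad \text{for all $d \ge 0$ and all $C \in \cC_{k,n}$.}
\]

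The second step is to convert this cumulative inequality into the desired ordering of $\SMN'$. By definition $\SMN'(C)$ is the least $d$ with $\ol{N}_C(d) \ge k$, where $k = |C|$; this is well defined and bounded by $n$, since the realizable samples of size $n$ are precisely the full-domain labelings $\{(x,c(x)) : x \in X\}$, one per concept, giving $\ol{N}_C(n) \ge N_C(n) = k$. Because $C_{k,n}$ minimizes the cumulative count at every threshold, $\ol{N}_{C_{k,n}}(d) \ge k$ implies $\ol{N}_C(d) \ge k$ for every $C$; hence the least $d$ attaining the threshold $k$ for $C$ never exceeds the least such $d$ for $C_{k,n}$. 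Therefore $\SMN'(C) \le \SMN'(C_{k,n})$ for all $C \in \cC_{k,n}$, which is exactly the asserted maximization.

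I expect no genuinely hard step here: granting the per-size result of Theorem~\ref{th:hart-general}, the corollary reduces to a routine monotonicity argument. The only points that require a moment's care are the boundary sizes $d = 0$ and $d > n$ (so that the summation is set up correctly), and the elementary observation that a common minimizer of every summand $N_C(d)$ is automatically a common minimizer of the partial sums $\ol{N}_C(d)$. This last fact — that the minimizer $C_{k,n}$ is the same for all choices of $d$ and hence governs the partial sums as well — is precisely the remark made just before the statement of the corollary.
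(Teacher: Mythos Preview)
Your proposal is correct and follows exactly the route the paper takes: the paper does not give a separate proof but simply notes that, since the minimizer $C_{k,n}$ from Theorem~\ref{th:hart-general} is the same for every size $d$, the corollary is immediate. Your write-up is a careful unpacking of this one-line justification, with the boundary cases $d=0$ and $d>n$ and the well-definedness of $\SMN'$ made explicit.
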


\noindent
Here is another (less immediate than Corollary~\ref{cor:smn-maximizer})
application of Theorem~\ref{th:hart-general}:

\begin{theorem} \label{th:challenging-relation}
For each concept class $C$, we have that $\AN'(C) \le \GMN'(C)$.
\end{theorem}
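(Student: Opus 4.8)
Write $n:=|X|$ and $k:=|C|$, and put $d^*:=\GMN'(C)$, so that $\sum_{i=0}^{d^*}\binom{n}{i}\ge k$ while $\sum_{i=0}^{d^*-1}\binom{n}{i}<k$. (The case $k=1$ is trivial, so assume $k\ge2$, whence $1\le d^*\le n$.) My plan rests on one simple observation: any collection of pairwise distinct labeled samples of the \emph{same} size is automatically an antichain, since two labeled samples can be nested only when they have different sizes. Hence, to prove $\AN'(C)\le d^*$ it suffices to show that the number of $C$-realizable samples of size \emph{exactly} $d^*$ is at least $k$; those samples then form an antichain of size $\ge k$ among the realizable samples of size $\le d^*$.

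First I would invoke Theorem~\ref{th:hart-general} to reduce to the extremal class. Since $C\in\cC_{k,n}$, the number of $C$-realizable samples of size $d^*$ is, by that theorem applied with $d=d^*$, minimized over $\cC_{k,n}$ by $C=C_{k,n}$. It therefore suffices to prove the inequality for the single class $C_{k,n}$, i.e.\ to show that $C_{k,n}$ has at least $k$ realizable samples of size exactly $d^*$. This is the same mechanism by which Corollary~\ref{cor:smn-maximizer} follows from Theorem~\ref{th:hart-general}, except that here we must count at a fixed size rather than cumulatively.

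Second, I would carry out the count for $C_{k,n}$ using Remark~\ref{rem:minimum-realizability}. A size-$d^*$ sample over $C_{k,n}$ is a pair $(A,\tau)$ with $A\seq\{0,\dots,n-1\}$, $|A|=d^*$, and $\tau:A\to\{0,1\}$, and it is realizable iff $\sum_{j\in A:\tau(j)=1}2^j\le k-1$. Counting such pairs by their set $T$ of $1$-labeled coordinates gives
\[
\#\{\text{realizable size-}d^*\text{ samples}\}=\sum_{T}\binom{n-|T|}{d^*-|T|},
\]
where $T$ ranges over $\mathcal I=\{T\seq\{0,\dots,n-1\}:\sum_{j\in T}2^j\le k-1\}$. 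Since binary representations are unique, $|\mathcal I|=k$, and $\mathcal I$ is downward closed under $\seq$ (if $T'\seq T$ then $\sum_{j\in T'}2^j\le\sum_{j\in T}2^j\le k-1$). The goal thus becomes the purely combinatorial inequality $\sum_{T\in\mathcal I}\binom{n-|T|}{d^*-|T|}\ge|\mathcal I|$; note this quantity also equals $\sum_{|A|=d^*}|\mathcal I\cap 2^A|$, the total trace of the ideal $\mathcal I$ on all $d^*$-subsets $A$.

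I expect this last inequality to be the main obstacle. The difficulty is that the terms with $|T|>d^*$ contribute nothing (their binomial coefficient vanishes), so the small sets of $\mathcal I$, where $\binom{n-|T|}{d^*-|T|}$ is large, must compensate for every large set; this is a ``bottom-heaviness'' property of ideals. I would prove it by exploiting that $\mathcal I$ is an initial segment of the weight order together with the defining inequalities $\sum_{i\le d^*}\binom ni\ge k>\sum_{i< d^*}\binom ni$, which force the size profile $(t_r)=(|\{T\in\mathcal I:|T|=r\}|)$ to be concentrated enough on small $r$ that $\sum_r\binom{n-r}{d^*-r}t_r$ exceeds $\sum_r t_r=k$. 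A clean sanity check is the canonical ideal $\mathcal I_0=\{T:|T|\le d^*\}$: here $\sum_{T\in\mathcal I_0}\binom{n-|T|}{d^*-|T|}=\sum_{r\le d^*}\binom n{d^*}\binom{d^*}{r}=\binom{n}{d^*}2^{d^*}$, and double counting $r$-subsets inside $d^*$-subsets gives $\binom n{d^*}\binom{d^*}{r}\ge\binom nr$, so the sum is $\ge\sum_{i\le d^*}\binom ni=|\mathcal I_0|$. The remaining work is to show that passing from $\mathcal I_0$ to the genuine (colex-extremal) ideal $\mathcal I$ of size $k\le|\mathcal I_0|$ preserves the inequality, which I anticipate handling by a shadow/Kruskal--Katona-type estimate or by induction on $k$. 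Naive shortcuts do not suffice: fixing one $d^*$-element domain yields only $\min(2^{d^*},k)$ samples, and labelling all coordinates $0$ yields only $\binom{n}{d^*}$ samples, either of which can fall short of $k$ (already for $k=3$, $n=2$), so the count genuinely must range over all coordinate subsets.
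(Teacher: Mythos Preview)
Your reduction is exactly the paper's: introduce the ``same-size'' antichain idea (the paper calls the corresponding parameter $\AN''$), invoke Theorem~\ref{th:hart-general} to pass to $C_{k,n}$, and then try to count the $C_{k,n}$-realizable samples of size exactly $d^*$. Your counting formula $\sum_{T\in\mathcal I}\binom{n-|T|}{d^*-|T|}$ is correct. The gap is that you stop precisely where the work lies: you acknowledge the inequality $\sum_{T\in\mathcal I}\binom{n-|T|}{d^*-|T|}\ge|\mathcal I|$ as the ``main obstacle'' and gesture at Kruskal--Katona or induction on $k$, but you do not prove it. As written, the proposal is a correct plan whose decisive step is missing.

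The paper closes this gap with a short device you never mention, and which makes the Kruskal--Katona route unnecessary. First (Claim~1) it shows $d^*=\GMN'(C_{k,n})\le\ell:=\lfloor\log k\rfloor$: already over the minimal domain size $\lceil\log k\rceil$ one has $\sum_{i\le\ell}\binom{\lceil\log k\rceil}{i}\ge k$, and the sum only grows with $n$. This bound is the key. With $d^*\le\ell$ in hand (Claim~2), the paper builds an \emph{explicit injection} from unlabeled samples $U$ of size $\le d^*$ into realizable labeled samples of size exactly $d^*$: label each $x_j\in U$ with $1$ if $j<\ell$ and with $0$ if $j\ge\ell$, then pad with $0$-labels on unused low indices $x_j$, $j<\ell$, up to size $d^*$. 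The padding is possible because $d^*\le\ell$; realizability holds because all $1$-labels sit on indices $<\ell$, so $\sum_{(x_j,1)\in f(U)}2^j\le 2^\ell-1\le k-1$; and $U$ is recoverable from $f(U)$ by reading off which low indices carry a $1$ and which high indices appear at all. Since there are $\sum_{i\le d^*}\binom{n}{i}\ge k$ such $U$, the image gives $\ge k$ realizable size-$d^*$ samples, finishing the proof. In short, the missing idea is the a~priori bound $d^*\le\lfloor\log k\rfloor$ and the low/high labeling-plus-padding injection it enables; once you see that, no shadow estimates are needed.
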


\begin{proof}
For each concept class $C$, let $\AN''(C)$ be the smallest number $d$ 
such that there exist $|C|$ many $C$-realizable labeled samples of size $d$. 
Since different sets of the same size always form an antichain,
it follows that  $\AN'(C) \le \AN''(C)$. It suffices therefore to show that,
for each concept class $C$, we have that $\AN''(C) \le \GMN'(C)$. 
Let $C_0$ be an arbitrary but fixed concept class. We set 
\[
k := |C_0|\ ,\ \ell := \lfloor \log k \rfloor\ \mbox{ and }\  n := |\dom(C_0)|
\enspace .
\] 
Note that $k \le 2^n$ because there can be at most $2^n$
distinct concepts over a domain of size $n$. By definition, $\GMN'(C_0)$ 
is the smallest number $d$ such that $\sum_{i=0}^{d}\binom{n}{i} \ge k$. 
Note that $\GMN'(C)$ is the same number for all concept classes from the 
family $C_{k,n}$.
Specifically $\GMN'(C_0) = \GMN'(C_{k,n})$. On the other hand,
it is immediate from Theorem~\ref{th:hart-general} (and again the fact 
that different sets of the same size always form an antichain) that
\[ 
\AN''(C_{k,n}) = \max\{\AN''(C): C \in \cC_{k,n}\} \enspace .
\]
Thus, if we knew that $\AN''(C_{k,n}) \le \GMN'(C_{k,n})$, 
we could conclude that
\[ 
\AN''(C_0) \le \AN''(C_{k,n}) \le \GMN'(C_{k,n}) = \GMN'(C_0)
\enspace .
\]
It suffices therefore to verify the 
inequality $\AN''(C_{k,n}) \le \GMN'(C_{k,n})$.
We will make use of the following auxiliary result.
\begin{description}
\item[Claim 1:] 
$\GMN'(C_{k,n}) \le \ell$.
\item[Proof of Claim 1:]
Since $k \le 2^n$, it follows that $n \ge \lceil \log k \rceil$. 
The function $n \mapsto \sum_{i=0}^{\ell}\binom{n}{i}$ is monotonically
increasing with $n$. But even for $n = \lceil \log k \rceil$,
we have that 
\[
S := \sum_{i=0}^{\ell}\binom{\lceil \log k \rceil}{i} \ge k
\]
for the following reason:
\begin{itemize}
\item 
If $k$ is a power of $2$, then $\ell = \log k = \lceil \log k \rceil$
and, therefore, $S = 2^\ell = k$.
\item
If $k$ is not a power of $2$, then $\ell = \lfloor \log k \rfloor$
and $\lceil \log k \rceil = \ell+1$. This implies 
that $S = 2^{\ell+1} - 1 > 2^{\log k}-1 = k-1$ and, because $S$ is 
an integer, it implies that $S \ge k$.  
\end{itemize}
It follows from this discussion that $\GMN'(C_{k,n}) \le \ell$.
\item[Claim 2:]
For each $d \in [\ell]$, there exists an injective mapping $f$ 
which transforms an unlabeled sample of size at most $d$ 
over domain\footnote{We talk here about the domain of $C_{k,n}$.} 
$X = \{x_0,x_1,\ldots,x_{n-1}\}$ into a (labeled) $C_{k,n}$-realizable 
sample of size exactly $d$.
\item[Proof of Claim 2:]
Let $U \seq X$ be an unlabeled sample of size at most $d$.
We define $f(U)$ as the (initially empty) labeled sample that 
is obtained from $U$ as follows:
\begin{enumerate}
\item 
For each $x_j \in U$ with $\ell \le j \le n-1$, insert $(x_j,0)$ 
into $f(U)$.
\item
For each $x_j \in U$ with $0 \le j \le \ell-1$, insert $(x_j,1)$ 
into $f(U)$.
\item
While $|f(U)| < d$, pick some instance $x_j$ 
from $\{x_0,\ldots,x_{\ell-1}\} \sm f(U)$ and insert $(x_j,0)$
into $f(U)$. 
\end{enumerate}
It is obvious that, after Step 3, the set $f(U)$ is of size $d$.
Moreover 
\[
\sum_{j:(x_j,1) \in U}2^j \le \sum_{j=0}^{\ell-1}2^j = 2^\ell-1 \le k-1
\enspace ,
\]
which, according to Remark~\ref{rem:minimum-realizability},
implies that $f(U)$ is $C_{k,n}$-realizable. Finally
observe that $U$ can be reconstructed from $f(U)$:
\[
U = \{x_j: (\ell \le j \le n-1 \wedge (x_j,0) \in f(U)) \vee
(0 \le j \le \ell-1 \wedge (x_j,1) \in f(U)\} \enspace .
\]
Hence $f$ is injective, which completes the proof of Claim 2.
\end{description}
The proof of Theorem~\ref{th:challenging-relation} can now be 
accomplished as follows. Set $d := \GMN'(C_{k,n})$. Then there 
exist at least $k$ distinct unlabeled samples of size at most $d$ 
over domain $X$.  Thanks to Claim 1, we know that $d \le \ell$. 
Thanks to Claim 2, we may now conclude that there exist at least $k$ 
distinct labeled $C_{k,n}$-realizable samples over domain $X$,
each of size exactly $d$. This shows that $\AN''(C_{k,n}) \le d$.
\end{proof}

\section{Monotonicity Considerations} \label{sec:monotonicity}

We say that $(C',X')$ is an \emph{extension of $(C,X)$} if $C \seq C'$ 
and $X \seq X'$. If $C \subset C'$ and $X=X'$, it is called a
\emph{class extension}. If $X \subset X'$, $C = \{c_{|X}: c \in C'\}$ 
and $|C| = |C'|$, it is called a \emph{domain extension}. Note that
the condition $|C| = |C'|$ ensures that $X$ \emph{distinguishes between 
the concepts in $C'$}, i.e., 
\begin{equation} \label{eq:domain-extension}
\forall c^1,c^2 \in C': c^1 \neq c^2 \impl c^1_{|X} \neq c^2_{|X}
\enspace .
\end{equation} 
A combinatorial parameter $Y(C)$, associated with a concept class $C$, 
is said to be \emph{class monotonic} if, for each class extension $(C',X)$ 
of $(C,X)$, we have that $Y(C) \le Y(C')$. It is said to be \emph{domain 
monotonic} if, for each domain extension $(C',X')$ of $(C,X)$, we have 
that $Y(C') \le Y(C)$. 

\begin{theorem} \label{th:monotonicity}
The monotonicity properties of the parameters in the SMN-GMN-AMN hierarchy
are as it is shown in Table~\ref{table:monotonicity} below.\footnote{
$\VCD$ is domain monotonic in the other direction: for each domain
extension $(C',X')$ of $(C,X)$, we have that $\VCD(C) \le \VCD(C')$.}
\end{theorem}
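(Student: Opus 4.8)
The plan is to verify Table~\ref{table:monotonicity} entry by entry, separating the positive entries (where monotonicity holds) from the negative ones (where it fails). For the positive entries I would rely on two generic tools: a \emph{restriction} argument for class monotonicity, and a \emph{lifting} argument for domain monotonicity. Throughout, a class extension keeps the domain $X$ fixed and enlarges $C$ to $C'$, whereas a domain extension keeps $|C| = |C'|$ and enlarges $X$ to $X'$, so that $c' \mapsto c'_{|X}$ is a bijection from $C'$ onto $C$.

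For class monotonicity of the matching-cost parameters I would restrict. Given an optimal $C'$-saturating matching $M'$ and $C \seq C'$, the restriction $M'_{|C}$ is $C$-saturating (consistency and distinctness are inherited), so $\SMN(C) \le \cost(M'_{|C}) \le \cost(M') = \SMN(C')$; since a sub-collection of an antichain is an antichain, the same restriction proves $\ACN(C) \le \ACN(C')$. For $\GMN$ I would instead use the characterization of greedy matchings as outputs of a greedy procedure: run a greedy procedure on $C'$ that inspects the concepts of $C$ first, in an order realizing a greedy $C$-matching $M$ of cost $\GMN(C)$, and then inspects $C' \sm C$. During the first phase no sample has yet been claimed by a concept of $C' \sm C$, so the output agrees with $M$ on $C$; the whole output is greedy on $C'$ and has cost at least $\GMN(C)$, giving $\GMN(C) \le \GMN(C')$. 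Class monotonicity of $\VCD$ is immediate (a set shattered by $C$ is shattered by any $C' \supseteq C$), and for the counting parameters $\SMN'$ and $\GMN'$ it follows because enlarging $C$ only raises the threshold $|C|$ while leaving $X$ (hence the relevant counts and binomial sums) unchanged.

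For domain monotonicity I would lift. Because restriction to $X$ is a bijection $C' \to C$, an optimal $C$-matching $M$ can be transported verbatim to $C'$: assign to each $c' \in C'$ the labeled sample $M(c'_{|X})$, which lives over $X \seq X'$. Consistency, distinctness, the antichain property, and the cost are all preserved, so $\SMN(C') \le \SMN(C)$ and $\ACN(C') \le \ACN(C)$. For the counting parameters, enlarging $X$ can only increase the number of $C$-realizable samples of each size and increase each binomial sum $\sum_{i=0}^{d}\binom{|X|}{i}$, while $|C|$ is unchanged; hence $\SMN'$, $\ACN'$, and $\GMN'$ all satisfy $Y(C') \le Y(C)$. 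By contrast $\VCD$ moves the other way under domain extensions (the footnote), so it is \emph{not} domain monotonic in the required sense.

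The main obstacle lies in the remaining entries, where monotonicity fails and must be witnessed by explicit counterexamples. The hardest of these is $\RTD$, which is known not to be monotone under taking subclasses, so a concrete pair $C \subset C'$ with $\RTD(C) > \RTD(C')$ is needed; I would also need to pin down the behavior of $\STD_{min}$, for which the sandwich $\ACN(C) \le \STD_{min}(C) \le \min\{\VCD(C),\RTD(C)\}$ is the natural starting point. The subtle case among the matching-cost parameters is the \emph{domain} monotonicity of $\GMN$: the lifting argument does not apply, since a greedy $C'$-matching over $X'$ uses the new points and cannot in general be projected back to $X$ without destroying consistency or distinctness, so deciding this entry (and, if it fails, constructing a domain extension on which the largest greedy cost strictly increases) is where I expect to spend the most effort. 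For the negative entries I would look first to small hand-built classes and to the family $C_{k,n}$ from Section~\ref{sec:kn-family}, whose realizability structure is completely understood via Remark~\ref{rem:minimum-realizability}.
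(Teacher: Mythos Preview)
Your proposal misidentifies several entries of the table and contains a genuine error in the argument for $\SMN'$. You claim class monotonicity of $\SMN'$ ``because enlarging $C$ only raises the threshold $|C|$ while leaving $X$ (hence the relevant counts \ldots) unchanged.'' But the relevant count for $\SMN'$ is the number of \emph{$C$-realizable} labeled samples, and this depends on $C$, not just on $|X|$: adding a concept can make previously unrealizable samples realizable. The paper shows precisely this: taking $C = C_{16,9}$ and $C' = C \cup \{\text{all-ones}\}$, one has $\SMN'(C) \ge 2$ but $\SMN'(C') = 1$, so $\SMN'$ is \emph{not} class monotonic. The same phenomenon and the same counterexample apply to $\AN'$, which you do not address on the class side at all.

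You also have the status of $\RTD$ backwards: the table records $\RTD$ as both class and domain monotonic (citing \cite{DFSZ2014}), so there is no counterexample to produce there. What \emph{does} require a nontrivial construction is the domain non-monotonicity of $\min\{\VCD,\RTD\}$, which you do not mention; the paper builds a one-instance domain extension of Warmuth's class $C_{MW}$ and checks by hand that both $\VCD$ and $\RTD$ jump from their respective values so that the minimum goes from $2$ to $3$. Finally, the paper does settle domain monotonicity of $\GMN$ affirmatively, by fixing a greedy matching $M'$ on $(C',X')$ via chosen orderings, then running the greedy procedure on $(C,X)$ with the induced orderings (deleting samples that touch $X'\setminus X$) and arguing that the resulting $M$ never lands earlier in the sample ordering than $M'$ does; your instinct that simple lifting fails is correct, but the entry is ``yes,'' not open.
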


\begin{table} 
\begin{center}
\begin{tabular}{|c||c|c|c|} 
\hline
& class monotonic & domain monotonic & Reference \\
\hline
\hline
$\VCD$ & yes & no & common knowledge \\
\hline
$\RTD$ & yes & yes & \cite{DFSZ2014} \\
\hline
$\min\{\VCD,\RTD\}$ & yes & no & this paper \\
\hline
$\STD_{min}$ & yes & yes & \cite{MSSZ2022} \\
\hline
$\AN$ & yes & yes & this paper \\
\hline
$\AN'$ & no & yes & this paper \\ 
\hline
$\GMN'$ & yes & yes & this paper \\
\hline
$\GMN$ & yes & yes & this paper \\
\hline
$\SMN$ & yes & yes & this paper \\
\hline
$\SMN'$ & no & yes & this paper \\
\hline
\end{tabular}
\caption{Monoconicity properties of the parameters of the SMN-GMN-AMN 
hierarchy.}
\label{table:monotonicity}
\end{center}
\end{table}

\begin{proof}
The monotonicity properties of $\VCD$, $\RTD$ and $\STD_{min}$
are well known. We will now verify the remaining entries in 
Table~\ref{table:monotonicity} row by row.
\begin{description}
\item[min\{VCD,RTD\}:]
Set $Y = \min\{\VCD,\RTD\}$. For trivial reasons, the following implication 
is valid: if $Z_1$ and $Z_2$ are class (resp.~domain) monotonic, 
then $\min\{Z_1,Z_2\}$ is class (resp.~domain) monotonic. 
Setting $Z_1 = \VCD$ and $Z_2 = \RTD$, it follows that $Y$ is class 
monotonic. \\
In order to show that $Y$ is not domain monotonic, we will make use
of the class $C'$ over domain $X' = \{x_1,\ldots,x_5,x_6\}$ that
is given by the following table:
\[
\begin{array}{l|cccccc}
C'& x_1 & x_2 & x_3 & x_4 & x_5 & x_6 \\
\hline
c_1 & {\mathbf 1} & \mathbf{1} & 0 & 0 & 0 & {\mathbf 0} \\
c_2 & {\mathbf 0} & {\mathbf 1} & 1 & 0 & 0 & {\mathbf 0} \\
c_3 & {\mathbf 0} & {\mathbf 0} & 1 & 1 & 0 & {\mathbf 1} \\
c_4 & {\mathbf 0} & {\mathbf 0} & 0 & 1 & 1 & {\mathbf 0} \\
c_5 & {\mathbf 1} & {\mathbf 0} & 0 & 0 & 1 & {\mathbf 1} \\
c_6 & 0 & 1 & 0 & 1 & 1 & 0 \\
c_7 & {\mathbf 0} & {\mathbf 1} & 1 & 0 & 1 & {\mathbf 1} \\
c_8 & 1 & 0 & 1 & 0 & 1 & 1 \\
c_9 & {\mathbf 1} & {\mathbf 0} & 1 & 1 & 0 & {\mathbf 0} \\
c_{10} & {\mathbf 1} & {\mathbf 1} & 0 & 1 & 0 & {\mathbf 1} 
\end{array}
\]
The class over subdomain $X = \{x_1,\ldots,x_5\}$ which is given
by the first 5 columns in this table is known under the name
Warmuth's class\footnote{This class was designed by Manfred Warmuth
so as to provide a simple class for which the $\RTD$ exceeds the $\VCD$.}
and denoted by $C_{MW}$. Clearly $(C',X')$ is a domain extension 
of $(C_{MW},X)$. The class $C_{MW}$ is known 
to satisfy $\VCD(C_{MW}) = 2$ and $\RTD(C_{MW}) = 3$.
\begin{description} 
\item[Claim:] $\VCD(C') = \RTD(C') = 3$.
\item[Proof:]
Clearly $\VCD(C') \le 3$ because $\VCD(C) = 2$ and extending the domain
by one instance (here: $x_6$) can increase the VC-dimension at most by $1$. 
An inspection of the table-entries highlighted in bold reveals
that $\VCD(C') \ge 3$. Hence \hbox{$\VCD(C')=3$}. \\
Since $\RTD$ is domain monotonic, we have that $\RTD(C') \le \RTD(C) = 3$.
Assume for contradiction that $\RTD(C') \le 2$. Then one of the subclasses 
\begin{eqnarray*}
C'_0 & = & \{c \in C': c(x_6) = 0\} = \{c_1,c_2,c_4,c_6,c_9\} \\
C'_1 & = & \{c \in C': c(x_6) = 1\} = \{c_3,c_5,c_7,c_8,c_{10}\}
\end{eqnarray*}  
must contain a concept which can be uniquely specified by one more
labeled example. An inspection of the following tables for $C'_0$
and $C'_1$ reveals that this is impossible so that we arrived at
a contradiction:
\[
\begin{array}{l|cccccc}
C'_0 & x_1 & x_2 & x_3 & x_4 & x_5 & x_6 \\
\hline
c_1 & 1 & 1 & 0 & 0 & 0 & 0 \\
c_2 & 0 & 1 & 1 & 0 & 0 & 0 \\
c_4 & 0 & 0 & 0 & 1 & 1 & 0 \\
c_6 & 0 & 1 & 0 & 1 & 1 & 0 \\
c_9 & 1 & 0 & 1 & 1 & 0 & 0 \\
\end{array}
\hspace{1cm}
\begin{array}{l|cccccc}
C'_1& x_1 & x_2 & x_3 & x_4 & x_5 & x_6 \\
\hline
c_3 & 0 & 0 & 1 & 1 & 0 & 1 \\
c_5 & 1 & 0 & 0 & 0 & 1 & 1 \\
c_7 & 0 & 1 & 1 & 0 & 1 & 1 \\
c_8 & 1 & 0 & 1 & 0 & 1 & 1 \\
c_{10} & 1 & 1 & 0 & 1 & 0 & 1 
\end{array}
\]
\end{description}
We conclude from this discussion that $Y(C) = 2 < 3 = Y(C')$.
Thus $Y$ is not domain monotonic.\footnote{It is easy to find 
pairs $(C',X')$ and $(C,X)$ such that $(C',X')$ is a domain
extension of $(C,X)$ and $Y(C) > Y(C')$. Thus extending the domain
can have effects in both directions: it can lead to a greater value
of $Y = \min\{\VCD,\RTD\}$, but it can also lead to a smaller value.}
\item[AMN:] 
Let $(C',X)$ be a class extension of $(C,X)$.
Let $M$ be a $C'$-saturating matching of cost $d$ that has the antichain
property. Then $M_{|C}$ is a $C$-saturating matching of cost at most $d$ 
that has the antichain property. This shows that $\AN$ is class monotonic. \\
Let $(C',X')$ be a domain extension of $(C,X)$.
Let $M$ be a $C$-saturating matching of cost $d$ that has the antichain
property. Let $M'$ be the mapping given by $M'(c') = M(c'_{|X})$. 
Because of~(\ref{eq:domain-extension}) $M'$ is a $C'$-saturating matching.
Moreover $M'$ is of the same cost as $M$ and inherits the antichain property
from $M$. This shows that $\AN$ is domain monotonic.
\item[AMN$'$:]
The following example shows that $\AN'$ is not class monotonic.
Consider the concept class $C = \vec{C}_{16,9}$. It has $18$ samples 
of size $1$ but only $13$ of them are realizable by~$C$. Since $13 < 16 = |C|$, 
we get $\AN'(C) \ge 2$. Let $C'$ be the class obtained from $C$ by adding
the all-ones concept. $C'$ is of size $17$ but now all $18$ samples of
size $1$ are realizable (and, of course, they form an antichain).
Hence $\AN'(C') = 1$, which is in contradiction with class monotonicity. \\
Let $(C',X')$ be a domain extension of $(C,X)$.
Remember that this implies that $|C| = |C'|$.
Let $A$ be an antichain of size $|C|$ consisting of $C$-realizable 
samples of size at most $d$ over domain $X$. Then $A$ is also an antichain 
of size $|C'|$ consisting of $C'$-realizable samples of size at most $d$
over domain $X'$ (albeit none of these samples contains a point
from $X' \sm X$). This shows that $\AN'$ is domain monotonic.
\item[GMN$'$:]
Let $(C',X)$ be a class extension of $(C,X)$. Then $|C'| > |C|$.
Clearly $\sum_{i=0}^{d}\binom{|X|}{i} \ge |C|$ 
if $\sum_{i=0}^{d}\binom{|X|}{i} \ge |C'|$. Thus $\GMN'$ is class 
monotonic. \\
Let $(C',X')$ be a domain extension of $(C,X)$, which implies 
that $|C'| = |C|$ and $|X'| > |X|$. 
Clearly $\sum_{i=0}^{d}\binom{|X'|}{i} \ge |C'|$
if $\sum_{i=0}^{d}\binom{|X|}{i} \ge |C|$. 
This shows that $\GMN'$ is domain monotonic.
\item[GMN:]
Let $(C',X)$ be a class extension of $(C,X)$. 
Let $M$ be a greedy $C$-saturating matching of cost $d$.
In order to prove class monotonicity, it suffices to specify a greedy
$C'$-saturating matching $M'$ of cost at least $d$.
To this end, we consider the following $C'$-saturating matching $M'$:
\begin{enumerate}
\item 
For each $c \in C$, set $M'(c) = M(c)$.
\item
Inspect the concepts $c' \in C' \sm C$ one by one. 
For each fixed $c' \in C' \sm C$, choose a shortest
labeled sample $S'$ among the ones which are consistent with $c'$ 
and have not already been used for another concept before.
Set $M(c') = S'$.
\end{enumerate}
Clearly the cost of $M'$ is lower-bounded by the cost of $M$.
By construction, $M'$ cannot be directly improved on a 
concept $c' \in C' \sm C$. The same remark applies to concepts $c \in C$,
because any direct improvement of $M'$ on $c$ would be a direct
improvement of $M$ of $C$, which is impossible because $M$ is greedy.
It follows from this discussion that $M'$ is a greedy $C'$-saturating
matching of cost at least $d$. This shows that $\GMN$ is class monotonic. \\
Let $(C',X')$ be a domain extension of $(C,X)$, which implies 
that $X \subset X'$. For each $c' \in C'$, let $c = c'_{|X}$ denote 
the corresponding concept in $C$. Arbitrarily fix 
a linear ordering $c'_1,c'_2,c'_3 \ldots$ of the concepts in $C'$ as well as
a linear extension $S_1,S_2,S_3,\ldots$ of the partial size-ordering 
of all $C'$-realizable samples over $X'$. Let $M'$ be the resulting 
greedy $C'$-saturating matching and let $d'$ denote its cost. In order 
to prove domain monotonicity, it suffices show that the greedy procedure 
applied to $C$ has a possible output of cost at least $d'$. 
To this end, we choose the ordering $c_1,c_2,c_3 \ldots$ for the concepts 
in $C$. Furthermore, our ordering of the $C$-realizable samples over $X$ (which 
are also $C'$-realizable!) is obtained from the ordering $S_1,S_2,S_3,\ldots$
by the removal of all samples containing one or more instances from $X' \sm X$.
Let $M$ be the resulting greedy $C$-saturating matching.
It is easy to see that the following holds for each $i \in [m]$:
if $M(c_i) = S_j$ and $M'(c'_i) = S_{j'}$, then $j' \le j$.
This clearly implies that the cost of $M$ ís not less than the cost $d'$
of $M'$. Hence $\GMN$ is domain monotonic.
\item[SMN:]
Let $(C',X)$ be a class extension of $(C,X)$. If $M'$ is a $C'$-saturating
matching of cost $d'$, then $M = M_{|C}$ is a $C$-saturating matching of 
cost $d \le d'$. This implies that $\SMN$ is class monotonic. \\
Let $(C',X')$ be a domain extension of $(C,X)$. If $M$ is a $C$-saturating
matching, then $M'$ with $M'(c') = M(c)$ for $c = c'_{|X}$
is a $C'$-saturating matching of the same cost.
This implies that $\SMN$ is domain monotonic.
\item[SMN$'$:]
The same example, $C = \vec{C}_{16,9}$, that we used above for showing 
that $\AN'$ is not class monotonic can also be used for showing that $\SMN'$ 
is not class monotonic. There are $19$ samples of size at most $1$ (including
the empty sample) but only 14 of them are realizable by $C$. 
Since $14 < 16 = |C|$, we have that $\SMN'(C) \ge 2$. Let again $C'$ be the 
class obtained from $C$ by adding the all-ones concept. $C'$ is of size $17$ 
but now all $19$ samples of size at most $1$ are realizable.
Hence $\SMN'(C') = 1$, which is in contradiction with class monotonicity. \\
Let $(C',X')$ be a domain extension of $(C,X)$. Each $C$-realizable
sample over domain $X$ is also a (very special) $C'$-realizable sample
over domain $X'$. This implies that $\SMN'(C') \le \SMN(C)$.
Thus $\SMN'$ is domain monotonic.
\end{description}
This completes the proof of Theorem~\ref{th:monotonicity}.
\end{proof}

\section{Evaluation of the Parameters on the Powerset}
\label{sec:powerset}

The combinatorial parameters that we have studied so far are now evaluated
for a special concept class, namely the powerset over the 
domain $X = \{x_1,\ldots,x_n\}$. This class will be denoted by $P_n$
in what follows. Note that $|P_n| = 2^n$, $\VCD(P_n) = n$ and every sample 
can be realized by $P_n$. It is also known that $\RTD(P_n)=n$~\cite{DFSZ2014} 
and that $\STD_{min}(P_n) = n$~\cite{MSSZ2022}. Here are some more observations:
\begin{enumerate} 
\item
$\AN(P_n) = \AN'(P_n) = 
\min\left\{d: 2^d\binom{n}{d} \ge 2^n\right\}$. \\
{\bf Reason:}
As shown in~\cite{MSSZ2022}, the maximum antichain contained in the
set of samples of size at most $d$ is formed by the set of labeled 
samples of size exactly $d$. This antichain has size $2^d\cdot\binom{n}{d}$.
This gives the second equation. It was furthermore shown in~\cite{MSSZ2022}
that there exists a $P_n$-saturating matching which assigns to each concept
a labeled sample of size~$\AN'(C)$. This gives the first equation.
\item
For all but finitely many $n$, we have 
that $0.22 \cdot n < \AN(P_n) < 0.23 \cdot  n$. \\
This was also shown in~\cite{MSSZ2022}.
\item 
$\SMN'(P_n) = \min\left\{d: \sum_{i=0}^{d}2^i\binom{n}{i} \ge 2^n\right\}$. \\
{\bf Reason:}
This is immediate from $n = |X|$, $|P_n| = 2^n$, the fact 
that $\sum_{i=0}^{d}2^i\binom{n}{i}$ is the number of samples of size
at most $d$ (all of which are realizable by $P_n$) and the definition
of the parameter $\SMN'$.
\item
$\GMN'(P_n) = \min\left\{d: \sum_{i=0}^{d}\binom{n}{i} \ge 2^n\right\} = n$. \\
{\bf Reason:}
The first equation is immediate from $n = |X|$, $|P_n| = 2^n$ and 
the definition of the parameter $\GMN'$. The second equation is immediate
from the binomial theorem.
\end{enumerate}

As for the powerset, the antichain matching number and the saturating matching
number are very close relatives:

\begin{theorem} \label{th:sandwich-an-smn}
For all but finitely many $n$, we have  
that $\AN(P_n)-1 \le \SMN'(P_n) \le \SMN(P_n) \le \AN(P_n)$. 
\end{theorem}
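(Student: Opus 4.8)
The plan is to notice that two of the three inequalities come for free and to concentrate on the leftmost one. Indeed, $\SMN'(P_n) \le \SMN(P_n)$ and $\SMN(P_n) \le \AN(P_n)$ are nothing but the instances at $C = P_n$ of the general relations $\SMN'(C) \le \SMN(C)$ and $\SMN(C) \le \ACN(C)$ already established in Section~\ref{sec:first-look}. Hence the whole burden of the proof is to verify that $\AN(P_n) \le \SMN'(P_n) + 1$, and this is the only place where the clause ``for all but finitely many $n$'' will be needed.

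To attack $\AN(P_n) \le \SMN'(P_n)+1$ I would work entirely with the closed forms recorded above, namely $\AN(P_n) = \min\{d: 2^d\binom{n}{d} \ge 2^n\}$ and $\SMN'(P_n) = \min\{d: \sum_{i=0}^{d}2^i\binom{n}{i} \ge 2^n\}$. Writing $d := \SMN'(P_n)$, and recalling that $\AN(P_n)$ is defined as a minimum, it suffices to show that the value $d+1$ is admissible for $\AN$, i.e.\ that $2^{d+1}\binom{n}{d+1} \ge 2^n$. Since $\sum_{i=0}^{d}2^i\binom{n}{i} \ge 2^n$ holds by the very definition of $d$, everything reduces to the single ``domination'' inequality
\[
2^{d+1}\binom{n}{d+1} \ge \sum_{i=0}^{d}2^i\binom{n}{i} \enspace .
\]

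The domination inequality I would prove by a standard geometric-tail estimate. Put $t_i := 2^i\binom{n}{i}$; then $t_{i+1}/t_i = 2(n-i)/(i+1)$, which is at least $2$ as soon as $i \le (n-1)/2$. Thus, provided $d \le (n-1)/2$, the terms $t_0,\ldots,t_d$ at least double at each step, so the partial sum is dominated by its last term, $\sum_{i=0}^{d} t_i \le 2 t_d = 2^{d+1}\binom{n}{d}$. Finally $\binom{n}{d} \le \binom{n}{d+1}$ also holds precisely when $d \le (n-1)/2$, and combining the two estimates upgrades the bound to $2^{d+1}\binom{n}{d+1}$, which is exactly the domination inequality.

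It remains to guarantee the hypothesis $d \le (n-1)/2$, and this is the only genuinely quantitative point in the argument. Here I would invoke facts that are already available: $d = \SMN'(P_n) \le \AN(P_n)$ (the right half of the sandwich, just shown) together with the estimate $\AN(P_n) < 0.23\,n$ from the second observation of Section~\ref{sec:powerset}. Consequently $d < 0.23\,n < (n-1)/2$ for all sufficiently large $n$, which supplies the hypothesis and completes the proof. I expect the only real subtlety to be bookkeeping rather than difficulty: one must be sure that bounding $d$ by $\AN(P_n)$ is not circular, but since that inequality is an independently proven general relation (not the statement under proof), no circularity occurs, and it is precisely the asymptotic estimate $\AN(P_n) < 0.23\,n$ that forces the ``all but finitely many $n$'' qualification.
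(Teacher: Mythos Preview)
Your argument is correct and follows the same overall skeleton as the paper: the two rightmost inequalities are instances of the general relations from Section~\ref{sec:first-look}; the leftmost one is reduced to the domination inequality $2^{d+1}\binom{n}{d+1} \ge \sum_{i=0}^{d}2^i\binom{n}{i}$ for $d = \SMN'(P_n)$; and the needed smallness of $d$ is supplied by $\SMN'(P_n) \le \AN(P_n) < 0.23\,n$ for large $n$.

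The one place where you diverge from the paper is in how the domination inequality itself is proved. The paper invokes an external lemma from~\cite{KS2000a} stating that $\binom{n}{d+1} \ge \Phi_d(n) := \sum_{i=0}^{d}\binom{n}{i}$ whenever $d+1 \le t(n) \approx n/3$, and then observes $2^{d+1}\Phi_d(n) > \sum_{i=0}^{d}2^i\binom{n}{i}$. You instead argue directly on the weighted terms $t_i = 2^i\binom{n}{i}$ via the ratio $t_{i+1}/t_i = 2(n-i)/(i+1) \ge 2$, obtaining a geometric tail bound that yields $\sum_{i\le d} t_i \le 2t_d \le 2^{d+1}\binom{n}{d+1}$ under the weaker hypothesis $d \le (n-1)/2$. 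Your route is more self-contained (no external citation needed) and requires a milder constraint on $d$, though both constraints are amply covered by the $0.23\,n$ estimate. The paper's route, on the other hand, isolates a clean unweighted fact ($\binom{n}{d+1}$ dominates the partial binomial sum) that may be of independent interest.
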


\begin{proof}
As shown before, the inequalities $\SMN'(C) \le \SMN(C) \le \AN(C)$ are 
valid even for arbitrary concept classes. It suffices therefore to show 
that $\SMN'(P_n) \ge \AN(P_n)-1$, or equivalently, that $\AN(P_n) \le \SMN'(P_n)+1$. 
Set $D = [n]$. Since 
\begin{equation} \label{eq:min-smn}
\SMN'(P_n) = \min\left\{d \in D: \sum_{i=0}^{d}2^i\binom{n}{i} \ge 2^n\right\}
\end{equation}
and
\begin{equation} \label{eq:min-an}
\AN(P_n) = \AN'(P_n) =  
\min\left\{d \in D: 2^d\binom{n}{d} \ge 2^n\right\} \enspace ,
\end{equation} 
it suffices to show that 
\begin{equation} \label{eq:an-smn}
2^{d+1}\binom{n}{d+1} \ge \sum_{i=0}^{d}2^i\binom{n}{i} \enspace .
\end{equation}
To this end, we define
\[
\Phi_d(n) = \sum_{i=0}^{d}\binom{n}{i}\ \mbox{ and }\ 
t(n) = \left\{ \begin{array}{cc}
           \lceil n/3 \rceil & \mbox{if $n \le 12$} \\
           \lfloor n/3 \rfloor + 1 & \mbox{if $n > 12$}
      \end{array} \right.  \enspace . 
\]
As proven in~\cite{KS2000a}, the term $\binom{n}{d}$ is related 
to $\Phi_{d-1}(n)$ as follows: 
\[
\forall d = 1,\ldots, t(n): \binom{n}{d} \ge \Phi_{d-1}(n)\ 
\mbox{ and }\ \forall d = t(n)+1,\ldots,n: \binom{n}{d} < \Phi_{d-1}(n)
\enspace .
\]
Under the assumption that $d+1 \le t(n)$, we get $\binom{n}{d+1} \ge \Phi_d(n)$
so that
\[
2^{d+1}\binom{n}{d+1} \ge 2^{d+1}\Phi_d(n) = 
2^{d+1} \cdot \sum_{i=0}^{d}\binom{n}{i} > \sum_{i=0}^{d}2^i\binom{n}{i}
\enspace .
\]
The verification of~(\ref{eq:an-smn}) will be complete if we can 
justify the assumption that $d+1 \le t(n)$. This is where asymptotics
comes into play. We know that, for all but finitely many $n$, we have 
that $\SMN'(P_n) \le \AN(P_n) < 0.23 \cdot n < n/3 < t(n)$. 
If follows that, for all but finitely many $n$, the equations~(\ref{eq:min-smn}) 
and~(\ref{eq:min-an}) are still valid when we set $D = [1:t(n)-1]$. Hence 
it is justified to assume that $d+1 \le t(n)$.
\end{proof}

By a straightforward, but tedious, calculation, one can verify that
the inequalities claimed in Theorem~\ref{th:sandwich-an-smn} are valid 
for all $n \ge 15$.

\begin{theorem} \label{th:powerset-gmn}
Let $H(p) = p\cdot\log(p) - (1-p)\cdot\log(1-p)$ be the binary entropy function 
and let $p_*$ be the unique solution to the equation $H(p) = 1-2p$ subject
to $p < 1/2$, and let $0 < p_0 < p_*$. 
Then $\GMN(P_n) \le \lceil (1-p_0) \cdot n \rceil$ 
holds for all but finitely many $n$.\footnote{numerical calculations
reveal that $p_* > 0.17$ so that we can choose $p_0 = 0.17$ and
obtain $\GMN(P_n) \le \lceil 0.83 n \rceil$.} 
\end{theorem}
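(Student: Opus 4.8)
The plan is to bound, for an arbitrary greedy $P_n$-saturating matching $M$, the size of the largest assigned sample, and to show it cannot exceed $\lceil(1-p_0)n\rceil$ once $n$ is large. Write $s=\cost(M)$ and fix a concept $c_0$ with $|M(c_0)|=s$. Since every sample is $P_n$-realizable and consistency with $c_0$ means agreeing with $c_0$ on the queried points, I would first normalize by relabelling each coordinate $x$ via $b\mapsto b\oplus c_0(x)$; this is a symmetry of $P_n$ that turns $c_0$ into the all-zero concept $\mathbf 0$, so that the samples consistent with $\mathbf 0$ are exactly the all-zero samples $Z_T=\{(x,0):x\in T\}$, $T\subseteq X$, one of each support. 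Greediness (no direct improvement at $\mathbf 0$) then says precisely that every all-zero sample of size $<s$ is used by some concept. Writing $s=(1-p)n$ and $B=\sum_{j\le pn}\binom nj$, the number of all-zero samples of size $<s$ equals $2^n-B$, so at least $2^n-B$ concepts are matched to all-zero samples; since $M$ uses exactly $2^n$ distinct samples, at most $B$ concepts are matched to samples that query some point with label~$1$.

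The key point, and the source of the improvement over the trivial bound $\GMN(P_n)\le\GMN'(P_n)=n$, is that consistency alone is not enough: the complementation map $c\mapsto Z_{X\setminus\mathrm{supp}(c)}$ injects all $2^n$ concepts into the all-zero samples, so a pure counting argument never beats $s\le n$. I would therefore exploit greediness at the blocker concepts as well. If a concept $c$ with support $W$ is greedily assigned an all-zero sample $Z_T$ of size $r$, then all of its sub-labellings of size $<r$ are used, and among these the ones querying a point of $W$ positively number exactly $\sum_{i<r}\binom ni-\sum_{i<r}\binom{n-|W|}{i}$; these are pairwise distinct positively-querying used samples, hence their number is at most $B$. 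The plan is to produce a single blocker for which this count exceeds $B$, yielding a contradiction. To locate such a blocker I would argue by pigeonhole on supports: for each $r<s$, all $\binom nr$ all-zero samples of size $r$ are used by distinct concepts whose support avoids the sample, and since there are not enough low-support concepts to absorb them all, some all-zero sample of size $r$ is used by a concept of support $|W|\ge r$, for which the positive-forcing count is at least $\sum_{i<r}\binom ni-\sum_{i<r}\binom{n-r}{i}$.

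It then remains to optimize the free parameter $r$ and compare with the budget $B$. Using the standard entropy estimate $\sum_{i\le\alpha n}\binom ni=2^{(1+o(1))nH(\alpha)}$ for $\alpha<1/2$, the forcing count and the budget are both of the form $2^{(1+o(1))n(\cdot)}$, and the desired contradiction (forcing count $>B$) reduces, after choosing $r$ optimally relative to $p$, to the single scalar inequality $H(p)<1-2p$, i.e.\ $p<p_*$. Thus for every fixed $p<p_*$ a greedy matching of cost $(1-p)n$ is impossible once $n$ is large; taking $p=p_0<p_*$ gives $\GMN(P_n)\le\lceil(1-p_0)n\rceil$ for all but finitely many $n$.

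The main obstacle I anticipate is this balance step: arranging the forced positively-querying samples to be simultaneously distinct and numerous enough to overrun the budget $B$, which is exactly where the relation $2p+H(p)=1$ enters. In particular the pigeonhole must be tuned so that the guaranteed blocker has both a large support and a large assigned sample (the two pull against the constraint $|W|\le n-r$), and the $o(n)$ losses in the entropy approximation must be absorbed by the strict inequality $p_0<p_*$; handling these asymptotics cleanly, rather than any single combinatorial identity, is the crux.
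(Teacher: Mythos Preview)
Your approach is genuinely different from the paper's. The paper runs a \emph{forward} analysis of the greedy process: it first shows (Claim~1) that in any greedy matching every labelled sample of size at most a threshold $r_*$ must receive an $M$-partner, because when such a sample comes up its $2^{n-r}$ consistent concepts cannot all be taken yet (there are fewer smaller samples than that); it then shows (Claim~2) that once all small samples are absorbed, so many concepts have been removed by non-type-A samples that every remaining concept is matched by stage $n-r_*-1$. The relation $H(p)+2p=1$ appears exactly as the asymptotic form of the defining inequality $\sum_{j\le r}2^j\binom{n}{j}\le 2^{n-r}$ for $r_*$; no secondary pigeonhole or blocker concept is needed.

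Your argument is a \emph{backward} contradiction: assuming cost $s=(1-p)n$, you use greediness at $c_0$ to bound the total number of used positive samples by $B\sim 2^{nH(p)}$, and then hunt for a single blocker whose own greediness forces more than $B$ positive samples. This is a nice idea, but the decisive step is not carried out. You assert that the comparison ``reduces, after choosing $r$ optimally, to $H(p)<1-2p$'', yet nothing in your sketch produces that inequality: with the pigeonhole you describe (giving $|W|\ge r$ whenever $r\lesssim n/3$), the forcing count is $\sum_{i<r}\binom{n}{i}-\sum_{i<r}\binom{n-r}{i}\sim 2^{nH(r/n)}$ while the budget is $B\sim 2^{nH(p)}$, so the comparison is simply $r/n>p$ subject to $r/n<1/3$. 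Taken at face value that yields the stronger threshold $p<1/3$, not $p<p_*$. Either your scheme really proves more than the theorem and your identification of where $1-2p$ enters is mistaken, or you have a different (lossier) pigeonhole or forcing estimate in mind that you have not written down. In either case, the balance computation you yourself flag as ``the crux'' is exactly the part that is missing; the paper's route sidesteps this by extracting the threshold from a single clean Hall-type condition rather than from a two-layer greediness argument.
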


\begin{proof}
Fix a linear ordering on the concepts in $P_n$ and a linear extension
of the partial size-ordering on the labeled samples over $X$ such that
the resulting $P_n$-saturating greedy matching $M$ has cost $\GMN(P_n)$. 
We can think of $M$ as being built in stages: 
in stage $0 \le j \le \GMN(P_n)$, each sample 
of size $j$ obtains one of the concepts in $C$ as its $M$-partner.
Let $\cS_r$ (resp.~$\cS_{\le r}$) denote the set of samples of size $r$ 
(resp.~of size at most $r$). We define
\[
r_* := \max\left\{r: \sum_{j=0}^{r}2^j\cdot\binom{n}{j}\le  2^{n-r} \right\}
\enspace .
\]
The proof of the theorem will be based on the following two claims.
\begin{description}
\item[Claim 1:]
Every sample in $\cS_{\le r_*}$ gets an $M$-partner.
\item[Proof of Claim 1:]
Let's look at the moment in which an $M$-partner 
for a labeled sample $S \in \cS_{\le r_*}$ has to be found.
The number of concepts already matched at this moment is upper-bounded by 
$|\cS_{\le r_*} \sm \{S\}| \le \sum_{j=0}^{r_*}2^j\cdot\binom{n}{j}-1 \le 2^{n-r_*}-1$.
Hence at least one of the at least $2^{n-r_*}$ many neighbors of $S$
must still be unmatched so that $S$ will certainly obtain an $M$-partner.
Since this reasoning applies to any concept in $\cS_{\le r_*}$,
Claim~1 follows.
\item[Claim 2:]
Suppose that $r$ is chosen such that every sample in $\cS_{\le r}$ 
gets an $M$-partner. Suppose that $q$ is chosen such that 
\begin{equation} \label{eq:q0}
\sum_{i=0}^{q-1}\binom{n}{i} + \sum_{j=0}^{r}\binom{n}{j} 
\le \sum_{j=0}^{r}2^j\cdot\binom{n}{j} \enspace .
\end{equation}
Then $\GMN(P_n) \le n-q$. In particular $\GMN(P_n) \le n-r-1$.
\item[Proof of Claim 2:]
We set $d = n-q$. We have to show that every concept $c \in P_n$ obtains 
an $M$-partner during stages $1,\ldots,d$. For reasons of symmetry, it 
suffices to prove this for the concept $c_0 = X$ (the concept assigning 
label $1$ to every instance in $X$). In the remainder of the proof 
of Claim 2, ``sample'' always means ``sample of size at most~$d$''.
We say a labeled sample is of \emph{type A} if it contains only 
$1$-labeled instances. The remaining samples are said to be of \emph{type B}. 
The samples of type A are precisely the ones which are consistent with $c_0$. 
We have the following situation right after stage~$r$:
\begin{itemize}
\item
The number of concepts with an M-partner of type B equals the number of 
samples of type B within $\cS_{\le r}$, and the latter number equals
$\sum_{j=0}^{r}2^j\binom{n}{j} - \sum_{j=0}^{r}\binom{n}{j}$. 
\item
$E_1 := 2^n - \left(\sum_{j=0}^{r}2^j\binom{n}{j} - \sum_{j=0}^{r}\binom{n}{j}\right)$
is the number of the remaining concepts.
\end{itemize}
The number of samples of type A clearly equals 
\begin{equation} \label{eq:type-A-concepts} 
E_2 := \sum_{j=0}^{d}\binom{n}{j} = 2^n -\sum_{j=d+1}^{n}\binom{n}{j} =
2^n -\sum_{j=0}^{n-d-1}\binom{n}{j} =\ 2^n -\sum_{j=0}^{q-1}\binom{n}{j} 
\enspace .
\end{equation}
Note that $E_1 \le E_2$ is equivalent to~(\ref{eq:q0}). Since $q$, by assumption,
is chosen such that the condition~(\ref{eq:q0}) is satisfied, we may
conclude that $E_1 \le E_2$. What does $E_1 \le E_2$ mean? 
It means that the number of concepts in $P_n \sm \{c_0\}$ not being matched 
with a labeled sample of type B is smaller than the number of samples of type A.
Since $c_0$ is adjacent to all samples of type A, this implies that $c_0$ 
will obtain an $M$-partner in stage $d$ (or even in an earlier stage). 
Hence $\GMN(P_n) \le d = n-q$, Finally note that~(\ref{eq:q0}) can be satisfied
by setting $q = r+1$. Hence $\GMN(P_n) \le n-q-1$, which completes the proof 
of Claim 2.
\end{description}
Let $r$ be a number with the property that
\[
r+1 \le \frac{n}{3} \mbox{ and } 2^{r+2}\cdot\binom{n}{r+1} \le  2^{n-r} 
\enspace .
\]
The condition $r+1 \le n/3$ implies 
that $\binom{n}{r+1} \ge \sum_{j=0}^{r}\binom{n}{j}$
so that $2^{r+2}\cdot\binom{n}{r+1} > \sum_{j=0}^{r}2^j\cdot\binom{n}{j}$.
An inspection of the definition of $r_*$ reveals that $r \le r_*$. We can 
therefore infer from Claim~1 that every sample in $\cS_{\le r}$ gets an 
$M$-partner. Setting $p := \frac{r+1}{n}$.  we obtain the following series 
of equivalent conditions:
\[
2^{r+2}\cdot\binom{n}{r+1} \le  2^{n-r} \dund \binom{n}{r+1} \le  2^{n-2r-2} 
\dund \binom{n}{pn} \le 2^{n-2pn} \dund \frac{1}{n}\log\binom{n}{pn} \le 1-2p 
\enspace .
\] 
It is well known that $\frac{1}{n}\log\binom{n}{pn}$ converges to $H(p)$
if $n$ goes to infinity. Let $p_* < 1/2$ be the unique solution of the
equation $H(p) =1-2p$. Fix any constant $0<p_0<p_*$. The theorem can now 
be obtained as follows. The inequality $p_0 < p_*$ implies
that $\frac{1}{n}\log\binom{n}{p_0 n} < 1 - 2 \cdot p_0$ provided
that $n$ is sufficiently large. This in turn implies that, 
for $r'_0$ given by $r'_0+1 = p_0 \cdot n$, we have
that $2^{r'_0+2}\cdot\binom{n}{r'_0+1} <  2^{n-r'_0}$. 
Setting $r_0 = \lfloor r'_0\rfloor$, 
we get $2^{r_0+2}\cdot\binom{n}{r_0+1} <  2^{n-r_0}$.
According to Claim 2, we have 
\[
\GMN(P_n) \le n-r_0-1 = n-\lfloor r'_0 \rfloor - 1 =
n - \lfloor r'_0+1 \rfloor = n - \lfloor p_0n \rfloor = \lceil (1-p_0)n \rceil
\enspace .
\]
This completes the proof of Theorem~\ref{th:powerset-gmn}.
\end{proof}

\section{Additivity Considerations} \label{sec:additivity}

\begin{definition} \label{def:free-combination}
Suppose that $C_1$ is a concept class over domain $X_1$, $C_2$ is
a concept class over domain $X_2$ and $X_1 \cap X_2 =  \eset$.
The \emph{free combination} of $C$ and $C'$, denoted 
by $C_1 \sqcup C_2$, is then given by
\[ 
C_1 \sqcup C_2 := \{c_1 \cup c_2: c_1 \in C_1\mbox{ and } c_2 \in C_2\}
\enspace .
\]
A combinatorial parameter $Y$ is said to be \emph{sub-additive}
(resp.~\emph{super-additive}) if 
\begin{equation} \label{eq:sub-additive}
Y(C_1 \sqcup C_2) \le Y(C_1) + Y(C_2)
\end{equation}
(resp.~$Y(C_1 \sqcup C_2) \ge Y(C_1) + Y(C_2)$) holds for each free combination
of two concept classes $C_1$ and $C_2$. If~(\ref{eq:sub-additive})
holds with equality for each free combination of two concept classes,
then $Y$ is said to be \emph{additive},
\end{definition}

\noindent
The goal of this section is to verify the entries of 
Table~\ref{table:additivity} below.

\begin{table}[h!]
\begin{center}
\begin{tabular}{|c||c|c|c|c|} 
\hline
& additive & sub-additive & super-additive & Reference \\
\hline
\hline
$\VCD$ & yes & yes & yes & common knowledge \\
\hline
$\RTD$ & yes & yes & yes & \cite{DFSZ2014} \\
\hline
$\min\{\VCD,\RTD\}$ & no & no & yes & this paper \\
\hline
$\STD_{min}$ & no & yes & no & this paper \\
\hline
$\AN$ & no & yes & no & this paper \\
\hline
$\AN'$ & no & yes & no & this paper \\ 
\hline
$\GMN'$ & no & yes & no & this paper \\
\hline
$\GMN$ & no & ? & no & this paper \\
\hline
$\SMN$ & no & yes & no & this paper \\
\hline
$\SMN'$ & no & yes & no & this paper \\
\hline
\end{tabular}
\caption{Additivity properties of the parameters of the
SMN-GMN-AMN hierarchy.}
\label{table:additivity}
\end{center}
\end{table}

\begin{remark}
$\VCD$ and $\RTD$ are known to be additive.
\end{remark}

The following relation, which holds for arbitrary reals $a,b,c,d$
is well known and easy to check:
\begin{equation} \label{eq:minimum-of-sums}
\min\{a+b , c+d\} \ge min\{a,c\} + \min\{b,d\} \enspace .
\end{equation}
Moreover, this inequality is strict if $a<c$ and $b>d$.

\begin{remark}
\begin{enumerate}
\item
If $Z_1$ and $Z_2$ are super-additive, then $Y := \min\{Z_1,Z_2\}$ is 
super-additive too.
\item
If $Z_1$ and $Z_2$ are additive and incomparable, then $Y := \min\{Z_1,Z_2\}$
is not additive.
\end{enumerate}
\end{remark}

\begin{proof}
Choose any two concept classes $C_1$ and $C_2$ over disjoint domains. 
Then:
\begin{eqnarray*}
Y(C_1 \sqcup C_2) & = & \min\{Z_1(C_1 \sqcup C_2) , Z_2(C_1 \sqcup C_2)\} \\
& \ge & \min\{Z_1(C_1) + Z_1(C_2) , Z_2(C_1) + Z_2(C_2)\} \\
& \stackrel{(*)}{\ge} & 
\min\{Z_1(C_1) , Z_2(C_1)\} + \min\{Z_1(C_2) , Z_2(C_2)\} = Y(C_1) + Y(C_2)
\enspace .
\end{eqnarray*}
The first equation holds by definition of $Y$. The subsequent inequality
holds by the super-additivity of $Z_1$ and $Z_2$. The inequality which 
is marked ``$(*)$'' is an application of~(\ref{eq:minimum-of-sums}). 
The final equation holds by definition of $Y$ again. The above calculation
shows that $Y$ is super-additive. Note that the inequality which is 
marked ``$(*)$'' becomes strict if $C_1$ and $C_2$ are chosen such 
that $Z_1(C_1) < Z_2(C_1)$ and $Z_2(C_2) < Z_1(C_2)$. Note that this 
is a possible choice under the assumption that $Z_1$ and $Z_2$ are 
incomparable. Thus, if $Z_1$ and $Z_2$ are incomparable, then $Y$
is not additive.
\end{proof}

\begin{example}
We may now conclude from the additivity and incomparability of $\VCD$ 
and $\RTD$ that $\min\{\VCD , \RTD\}$ is super-additive but not not 
additive.
\end{example}

\begin{remark} \label{rem:std-sub-additive}
$\STD_{min}$ is sub-additive but not additive.
\end{remark}

\begin{proof}
Choose any two concept classes $C_1$ and $C_2$ over disjoint 
domains $X_1 = \dom(C_1)$ and $X_2 = \dom(C_2)$.
For $i=1,2$, let $(M_k^i)_{k=0,\ldots,k^*(i)}$ be a subset teaching sequence 
of cost $d_i := \STD_{min}(C_i)$ for $C_i$. Let $M_{k_1,k_2}$ be the
$(C_1 \sqcup C_2)$-saturating matching given 
by $M_{k_1,k_2}(c_1 \cup c_2) = M_{k_1}(c_1) \cup M_{k_2}(c_2)$. 
It is easy to see that the sequence which starts with
$M_{0,0} , M_{1,0}1 , \ldots, M_{k^*(1),0}$ 
and continues with $M_{k^*(1),0} , M_{k^*(1),1} , \ldots , M_{k^*(1),k^*(2)}$
is a subset teaching sequence of cost $d_1+d_2$ for $C_1 \sqcup C_2$.
Hence $\STD_{min}$ is sub-additive. Let now $C_1 = \{c_0,c_1,c_2,c_3\}$ be 
the concept class over domain $X_1 = \{u,v\}$ given by $c_0 = \eset$,
$c_1 = \{u\}$, $c_2 = \{v\}$ and $c_3 = \{u,v\}$. Let $C_2$ consist of 
the single concept $X_2$ over domain $X_2 = \{a,b,c,d\}$. 
Then $\STD_{min}(C_1) = 2$ and $\STD_{min}(C_2) = 0$. The following 
subset teaching sequence shows that $\STD_{min}(C \sqcup C') = 1$:
\[
\begin{array}{ccl}
M_0(c_0 \cup X_2) & = & (u,0),(v,0),(a,1),(b,1),(c,1),(d,1) \\
M_0(c_1 \cup X_2) & = & (u,1),(v,0),(a,1),(b,1),(c,1),(d,1) \\
M_0(c_2 \cup X_2) & = & (u,0),(v,1),(a,1),(b,1),(c,1),(d,1) \\
M_0(c_1 \cup X_2) & = & (u,1),(v,1),(a,1),(b,1),(c,1),(d,1) \\
\hline
%M_1(c_0 \cup X_2) & = & (u,0),(v,0),(a,1) \\
%M_1(c_1 \cup X_2) & = & (u,1),(v,0),(a,1),(b,1),(c,1),(d,1) \\
%M_1(c_2 \cup X_2) & = & (u,0),(v,1),(a,1),(b,1),(c,1),(d,1) \\
%M_1(c_1 \cup X_2) & = & (u,1),(v,1),(a,1),(b,1),(c,1),(d,1) \\
%\hline
%M_2(c_0 \cup X_2) & = & (u,0),(v,0),(a,1) \\
%M_2(c_1 \cup X_2) & = & (u,1),(v,0),(b,1) \\
%M_2(c_2 \cup X_2) & = & (u,0),(v,1),(a,1),(b,1),(c,1),(d,1) \\
%M_2(c_1 \cup X_2) & = & (u,1),(v,1),(a,1),(b,1),(c,1),(d,1) \\
%\hline
%M_3(c_0 \cup X_2) & = & (u,0),(v,0),(a,1) \\
%M_3(c_1 \cup X_2) & = & (u,1),(v,0),(b,1) \\
%M_3(c_2 \cup X_2) & = & (u,0),(v,1),(c,1) \\
%M_3(c_1 \cup X_2) & = & (u,1),(v,1),(a,1),(b,1),(c,1),(d,1) \\
%\hline
M_2(c_0 \cup X_2) & = & (u,0),(v,0),(a,1) \\
M_2(c_1 \cup X_2) & = & (u,1),(v,0),(b,1) \\
M_2(c_2 \cup X_2) & = & (u,0),(v,1),(c,1) \\
M_2(c_1 \cup X_2) & = & (u,1),(v,1),(d,1) \\
%\hline
%M_5(c_0 \cup X_2) & = & (a,1) \\
%M_5(c_1 \cup X_2) & = & (u,1),(v,0),(b,1) \\
%M_5(c_2 \cup X_2) & = & (u,0),(v,1),(c,1) \\
%M_5(c_1 \cup X_2) & = & (u,1),(v,1),(d,1) \\
%\hline
%M_6(c_0 \cup X_2) & = & (a,1) \\
%M_6(c_1 \cup X_2) & = & (b,1) \\
%M_6(c_2 \cup X_2) & = & (u,0),(v,1),(c,1) \\
%M_6(c_1 \cup X_2) & = & (u,1),(v,1),(d,1) \\
%\hline
%M_7(c_0 \cup X_2) & = & (a,1) \\
%M_7(c_1 \cup X_2) & = & (b,1) \\
%M_7(c_2 \cup X_2) & = & (c,1) \\
%M_7(c_1 \cup X_2) & = & (u,1),(v,1),(d,1) \\
\hline
M_3(c_0 \cup X_2) & = & (a,1) \\
M_3(c_1 \cup X_2) & = & (b,1) \\
M_3(c_2 \cup X_2) & = & (c,1) \\
M_3(c_1 \cup X_2) & = & (d,1) 
\end{array}
\]
It follows that $\STD_{min}$ is not additive.
\end{proof}

The powerset $P_n$ over the domain $X = \{x_1,\ldots,x_n\}$ can be seen
as the free combination of $P_1^1 \sqcup\ldots\sqcup P_1^n$ where $P_1^i$
is the powerset over the domain $\{x_i\}$. For trivial reasons, we have
the following implications:
\begin{itemize}
\item If $Y$ is additive, then $Y(P_n) = n \cdot Y(P_1)$.
\item If $Y$ is sub-additive, then $Y(P_n) \le n \cdot Y(P_1)$.
\item If $Y$ is super-additive, then $Y(P_n) \ge n \cdot Y(P_1)$.
\end{itemize}

\begin{example} \label{ex:additivity-test}
It is easy to check that all parameters in the SMN-GMN-AMN hierarchy evaluate to $1$
on the class $P_1$. We know already that $\GMN(P_n)$ and $\AN(P_n)$ are 
less than $n$ provided that $n$ is sufficiently large. Of course, the same
must be true for all parameters below $\GMN$ and $\AN$ in the SMN-GMN-AMN hierarchy.
We may conclude from this short discussion that none of the 
parameters $\GMN , \SMN , \SMN' , \AN ,\AN'$ is super-additive 
(which implies that none of them is additive).
\end{example}

\begin{theorem} \label{th:sub-additive-parameters}
Each of the parameters $\GMN' , \SMN , \SMN' , \AN ,\AN'$ is sub-additive
but not additive.
\end{theorem}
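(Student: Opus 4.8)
The plan is to split the proof into its sub-additivity half and its non-additivity half, and to record at the outset that for non-additivity the five parameters fall into two groups, only one of which is genuinely delicate. For the sub-additivity of $\SMN$ and $\AN$ I would argue constructively. Given optimal matchings $M_1$ for $C_1$ and $M_2$ for $C_2$ (of cost $\SMN(C_i)$, resp.\ of cost $\AN(C_i)$ and with the antichain property), define the $(C_1 \sqcup C_2)$-saturating matching $M$ by $M(c_1 \cup c_2) = M_1(c_1) \cup M_2(c_2)$. Since $X_1 \cap X_2 = \eset$, a labeled sample over $X_1 \cup X_2$ splits uniquely into its restrictions to $X_1$ and $X_2$; from this one reads off that $M$ is consistent, that it is injective because $M_1$ and $M_2$ are, and that $\cost(M) = \cost(M_1) + \cost(M_2)$. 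For $\AN$ one checks in addition that $M_1(c_1) \cup M_2(c_2) \seq M_1(c_1') \cup M_2(c_2')$ forces $M_1(c_1) \seq M_1(c_1')$ and $M_2(c_2) \seq M_2(c_2')$, so the antichain property of $M_1$ and $M_2$ is inherited by $M$. This yields $\SMN(C_1 \sqcup C_2) \le \SMN(C_1) + \SMN(C_2)$ and the analogous bound for $\AN$.

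For the primed counting parameters I would exploit the structural fact that the $(C_1 \sqcup C_2)$-realizable samples are exactly the sets $S_1 \cup S_2$ with $S_1$ being $C_1$-realizable and $S_2$ being $C_2$-realizable, that $(S_1,S_2) \mapsto S_1 \cup S_2$ is injective, and that $|S_1 \cup S_2| = |S_1| + |S_2|$. Setting $d_i := \SMN'(C_i)$, the product of the two families of realizable samples of sizes $\le d_1$ and $\le d_2$ produces at least $|C_1| \cdot |C_2| = |C_1 \sqcup C_2|$ realizable samples of size $\le d_1 + d_2$, which is precisely what $\SMN'(C_1 \sqcup C_2) \le d_1 + d_2$ demands. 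The same product, applied now to antichains of realizable samples, remains an antichain (an inclusion among products forces an inclusion in each coordinate), giving sub-additivity of $\AN'$. For $\GMN'$ the claim reduces to the arithmetic inequality $\left(\sum_{i=0}^{d_1}\binom{n_1}{i}\right)\left(\sum_{j=0}^{d_2}\binom{n_2}{j}\right) \le \sum_{k=0}^{d_1+d_2}\binom{n_1+n_2}{k}$, which follows from Vandermonde's identity $\binom{n_1+n_2}{k} = \sum_{i+j=k}\binom{n_1}{i}\binom{n_2}{j}$ together with the inclusion $\{(i,j): i \le d_1,\, j \le d_2\} \seq \{(i,j): i+j \le d_1+d_2\}$; with $d_i = \GMN'(C_i)$ the left-hand product is $\ge |C_1| \cdot |C_2|$, so $\GMN'(C_1 \sqcup C_2) \le d_1 + d_2$.

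For non-additivity, the parameters $\SMN, \SMN', \AN, \AN'$ are already settled by Example~\ref{ex:additivity-test}: none of them is super-additive, hence none is additive. The only parameter left uncovered is $\GMN'$, and this is the single point needing real care, because $\GMN'(P_n) = n = n \cdot \GMN'(P_1)$ makes $\GMN'$ behave additively on every powerset, so the blanket super-additivity argument does not apply. I would therefore supply a small dedicated witness. Since $\GMN'(C)$ depends only on $|\dom(C)|$ and $|C|$, it is enough to fix sizes: take $C_1$ with $|C_1| = 4$ over a domain of size $2$, and $C_2$ with $|C_2| = 2$ over a domain of size $4$. Then $\GMN'(C_1) = 2$ and $\GMN'(C_2) = 1$, while $C_1 \sqcup C_2$ has $8$ concepts over a domain of size $6$, and $\sum_{i=0}^{2}\binom{6}{i} = 22 \ge 8$ gives $\GMN'(C_1 \sqcup C_2) = 2 < 3 = \GMN'(C_1) + \GMN'(C_2)$.

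The main obstacle is exactly this last step: because $\GMN'$ is additive on all powersets, the uniform super-additivity reasoning of Example~\ref{ex:additivity-test} cannot be invoked to rule out additivity of $\GMN'$, so a separate counterexample must be found by hand. The only other non-routine ingredient is the Vandermonde inequality underpinning the sub-additivity of $\GMN'$; the remaining four sub-additivity arguments are the straightforward product constructions described above.
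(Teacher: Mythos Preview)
Your proposal is correct and follows essentially the same architecture as the paper's proof: the product construction $M(c_1\cup c_2)=M_1(c_1)\cup M_2(c_2)$ for $\SMN$ and $\AN$, the injective product of realizable families for $\SMN'$ and $\AN'$, the reference to Example~\ref{ex:additivity-test} for non-additivity of those four, and a dedicated counterexample for $\GMN'$. The only cosmetic differences are that the paper proves the inequality $\big(\sum_{i\le d_1}\binom{n_1}{i}\big)\big(\sum_{j\le d_2}\binom{n_2}{j}\big)\le\sum_{k\le d_1+d_2}\binom{n_1+n_2}{k}$ by the same injective-union map on \emph{unlabeled} samples rather than by invoking Vandermonde, and its $\GMN'$ counterexample uses two classes from $\cC_{n+2,n}$ (giving $\GMN'(C_1\sqcup C_2)\le 3<4$) instead of your smaller pair; both witnesses are valid.
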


\begin{proof}
Choose any two concept classes $C_1$ and $C_2$ over disjoint 
domains $X_1$ and $X_2$, respectively. Clearly
\[ |C_1 \sqcup C_2| = |C_1| \cdot |C_2| \enspace . \]
This multiplicativity  makes it easy to show sub-additivity 
for various parameters:
\begin{enumerate}
\item 
We first consider the parameter $\GMN'$.
For $i=1,2$, let $U_i = \binom{X_i}{\le d_i}$ be the set of unlabeled 
samples of size at most $d_i$ with sample points taken from $X_i$. 
Let $U = \binom{X_1 \cup X_2}{\le d_1+d_2}$ be the set of unlabeled 
samples of size at most $d_1+d_2$ with sample points taken from $X_1 \cup X_2$.
Then $(S_1,S_2) \mapsto S_1 \cup S_2$ is an injective mapping 
from $U_1 \times U_2$ to~$U$. Thus $|U| \ge |U_1| \cdot |U_2|$.
Hence, if $|U_1| \ge |C_1|$ and $|U_2| \ge |C_2|$, then 
\[
|U| \ge |U_1| \cdot |U_2| \ge |C_1| \cdot |C_2| = |C_1 \sqcup C_2|
\enspace ,
\]
which implies the sub-additivity of $\GMN'$.
\item
Let us now consider the parameter $\SMN'$. The reasoning is analogous 
to the reasoning for the parameter $\GMN'$. This time $U_i$ is the
number of $C_i$-realizable (labeled) samples of size at most $d_i$
and $U$ is the set of $(C_1 \sqcup C_2)$ realizable samples of size
at most $d_1+d_2$. The main observations is that the union of a
$C_1$-realizable sample with a $C_2$ realizable sample yields a
$(C_1 \sqcup C_2)$-realizable sample 
so that $(S_1,S_2) \mapsto S_1 \cup S_2$ is an injective mapping
from $U_1 \times U_2$ to $U$. Moreover, if $M_i$ is a $C_i$-saturating
matching in $(C_i,X_i)$ of cost $d_i$ (for $i = 1,2$), then $M$ given 
by $M(c_1 \cup c_2) = M_1(c_1) \cup M_2(c_2)$ 
is a $(C_1 \sqcup C_2)$-saturating matching. 
It follows that $\SMN'$ and $\SMN$ are sub-additive.
\item
The sub-additivity of the parameters $\AN'$ and $\AN$ can be shown
in a similar fashion. The only observation that we need, in addition
to the previous ones, is the following:
if $U_i \seq 2^{X_i \times \{0,1\}}$ is a collection of samples which 
forms an antichain (for $i=1,2$), 
then $U = \{S_1 \cup S_2: S_1 \in U_1 \mbox{ and } S_2 \in U_2\}$
is an antichain too. 
\end{enumerate}
We mentioned already in Example~\ref{ex:additivity-test} that
the parameters $\SMN , \SMN' , \AN , \AN'$ are not additive.
In order to complete the verification of the entries in 
Table~\ref{table:additivity}, we still have to show that $\GMN'$ is 
not additive. Choose both concept classs, $C_1$ and $C_2$, from the 
$(k,n)$-family with $k = n+2$, i.e., $|X_i| = n$ and $|C_i| = n+2$ 
for $i=1,2$. Then $\GMN'(C_1) = \GMN'(C_2) = 2$. A simple calculation 
shows that 
\[
\binom{2n}{0} + \binom{2n}{1} + \binom{2n}{2} + \binom{2n}{3} > (n+2)^2
= |C_1 \sqcup C_2|
\]
holds for all $n \ge 3$. Hence, assuming $n\ge3$, we have 
\[
\GMN'(C_1 \sqcup C_2) \le 3 < 4 = \GMN'(C_1) + \GMN'(C_2) \enspace ,
\]
which shows that $\GMN'$ is not additive. 
This complertes the proof of Theorem~\ref{th:sub-additive-parameters}.
\end{proof}

\section{A Final Look at the SMN-GMN-AMN Hierarchy}
\label{sec:final-look}

A \emph{combinatorial parameter (associated with concept classes)}
is a mapping $C \ra Y(C)$ which assigns a non-negative number $Y(C)$
to each (finite) concept class $C$. Suppose that $Y$ and $Z$ are two 
combinatorial parameters. We will write $Y \ra Z$ if the following 
hold:
\begin{enumerate}
\item 
For each concept class $C$, we have that $Y(C) \le Z(C)$.
\item 
The $\le$-relation between $Y(C)$ and $Z(C)$ is \emph{occasionally strict},
i.e., there exists a concept class $C$ such that $Y(C) < Z(C)$.
\end{enumerate}  
We say that $Y$ and $Z$ are \emph{incomparable} if there exist concept
classes $C_1$ and $C_2$ such that $Y(C_1) < Z(C_1)$ and $Z(C_2) < Y(C_2)$.
This is the case iff $Y \neq Z$ but neither $Y \ra Z$ nor $Z \ra Y$. 

In the course of this paper, we have verified various $\le$-relations
between combinatorial parameters associated with concept classes.
We will show now, by a series of examples and remarks, that all these 
$\le$-relations are occasionally strict so that we arrive at the diagram 
in Fig.~\ref{fig:hierarchy} below.

\begin{remark} \label{rem:smn-separation}
Suppose that $Y(C) \le Z(C)$ for each concept class $C$. Then,
if $Z$ is class monotonic but Y is not, then $Y \ra Z$.
Specifically, if $(C',X)$ is a class extension of $(C,X)$ 
such that $Y(C') < Y(C)$, then $Y(C') < Z(C')$.
\end{remark}

\begin{proof}
Let $(C',X)$ be a class extension of $(C,X)$ such that $Y(C') < Y(C)$.
Then $Z(C') \ge Z(C) \ge Y(C) > Y(C')$. 
\end{proof}

\begin{example}
We know already that $\SMN$ is class monotonic while $\SMN'$ is not.
Specifically, let $C = \vec{C}_{16,9}$ and let $C'$ be obtained from $C$
by adding the all-ones concept. Then, as we have already shown before,
we have that $\SMN(C') = 1 < \SMN(C')$. Now an application of
Remark~\ref{rem:smn-separation} yields $\SMN'(C') < \SMN(C')$. 
Hence $\SMN' \ra \SMN$.
\end{example}

\begin{example}
Clearly $\SMN(P_2) \ge 1$. The matching $M$ with 
\[
M(\eset) = \eset\ , M(\{x_1\}) = \{(x_2,0)\}\ ,\ M(\{x_2\}) = \{(x_2,1)\}\ ,\ 
M(\{x_1,x_2\}) = \{(x_1,1)\}
\]
witnesses that $\SMN(P_2) \le 1$. Thus $\SMN(P_2) = 1$. 
Clearly $\GMN(P_2) \le 2$. Consider the ordering 
$\{x_1,x_2\} , \{x_1\} , \{x_2\} , \eset$ for the concepts in $P_2$
and the ordering 
\[
\eset , \{(x_2,0)\} , \{(x_1,0)\} , \{(x_1,1)\} ,\{(x_2,1)\} , 
\{(x_1,0) ,(x_2,0)\} \ldots
\]
for the labeled samples over the domain $\{x_1,x_2\}$. The resulting 
greedy matching $M'$ is then given by
\[ 
M'(\{x_1,x_2\} = \eset\ ,\ M'(\{x_1\}) = \{(x_2,0)\}\ ,\ M'(\{x_2\} = \{(x_1,0)\}\ ,\ 
M'(\eset) = \{(x_1,0) , (x_2,0)\} \enspace .
\]
It follows that $\GMN(P_2) \ge 2$. Thus $\GMN(P_2) = 2 > 1 = \SMN(P_2)$
and, therefore, $\SMN \ra \GMN$. 
\end{example}

\begin{remark} \label{ex:powerset-gmn}
We know from Theorem~\ref{th:powerset-gmn} that $\GMN(P_n) < n$
for all sufficiently large $n$. We know furthermore that $\GMN'(P_n) = n$
for all $n \ge 1$. Hence $\GMN \ra \GMN'$.
\end{remark}

\begin{remark}
Suppose that $Y,Z,Z'$ are combinatorial parameters where $Z$ and $Z'$
are incomparable and, for each concept class $C$, we have that $Y(C) \le Z(C)$ 
and $Y(C) \le Z'(C)$. Then $Y \ra Z$ and $Y \ra Z'$. 
\end{remark}

\begin{proof}
Pick concept classes $C$ and $C'$ such that $Z(C) < Z'(C)$
and $Z'(C') < Z(C')$. Then $Y(C) \le Z(C) < Z(C')$
and $Y(C') \le Z'(C') < Z(C')$. Hence $Y \ra Z'$ and $Y \ra Z$.
\end{proof}

\begin{example}
Consider concept class $C = \vec{C}_{k,n}$ with $k = 2^\ell$
for some $\ell \ge 2$ and $n \ge k-1$. 
Since $\binom{n}{0} + \binom{n}{1} = 1 + n \ge k$,
it follows that $\GMN'(C) = 1$. It is easy to see 
that $\VCD(C) = \RTD(C) = \ell$. It follows 
that $\GMN'(C) < \min\{\VCD(C) , \RTD(C)\}$ and, 
therefore, $\GMN' \ra \min\{\VCD , \RTD\}$.
\end{example}

\begin{example} \label{ex1:smn-an}
The smallest number $d$ such that $\sum_{i=0}^{d}2^i\binom{6}{i} \ge 2^6$
equals $2$ whereas the smallest number $d$ such that $2^d\binom{6}{d} \ge 2^6$
equals $3$. It follows that $\SMN'(P_6) = 2 < 3 = \AN'(P_6)$.
Thus $\SMN' \ra \AN'$.
\end{example}

\begin{example} \label{ex:an}
We know already that $\AN$ is class monotonic while $\AN'$ is not.
Specifically, let $C = \vec{C}_{16,9}$ and let $C'$ be obtained from $C$
by adding the all-ones concept. Then, as we have already shown before,
we have that $\AN'(C') = 1 < \AN'(C)$. Now an application of
Remark~\ref{rem:smn-separation} yields $\AN'(C') < \AN(C')$.
Hence $\AN' \ra \AN$.
\end{example}

\begin{remark}
We mentioned already in Section~\ref{sec:powerset} 
that $\STD_{min}(P_n) = n$ is valid for all $n \ge 1$ 
and $\AN(P_n) < 0.23 n$ is valid for all sufficiently 
large $n$. It follows that $\AN \ra \STD_{min}$.
\end{remark}

\begin{remark} \label{rem:free-combination}
Suppose that $Z_1$ and $Z_2$ are incomparable additive combinatorial 
parameters and $Y$ is a sub-additive combinatorial parameter which 
satisfies $Y(C) \le Z_1(C)$ and $Y(C) \le Z_2(C)$ for each concept 
class $C$. Then $Y \ra \min\{Z_1,Z_2\}$.
\end{remark}

\begin{proof}
Pick two concept classes $C_1$ and $C_2$ such that $Z_1(C_1) < Z_2(C_1)$
and $Z_2(C_2) < Z_1(C_2)$. Rename the domain elements (if necessary) such 
that $\dom(C_1) \cap \dom(C_2) = \eset$. Then
\begin{eqnarray*}
Y(C_1 \sqcup C_2) & \le & Y(C_1) + Y(C_2) \le Z_1(C_1) + Z_2(C_2) \\
& < & 
\min\{Z_1(C_1)+Z_1(C_2) , Z_2(C_1)+Z_2(C_2)\} = 
\min\{Z_1(C_1 \sqcup C_2) , Z_2(C_1 \sqcup C_2)\}
\enspace.
\end{eqnarray*}
It follows that $Y \ra \min\{Z_1,Z_2\}$.
\end{proof}

\begin{example}
It is well known that $\VCD$ and $\RTD$ are additive.
According to Remark~\ref{rem:std-sub-additive}, $\STD_{min}$ 
is sub-additive. By an application of Remark~\ref{rem:free-combination}, 
we get $\STD_{min} \ra \min\{\VCD,\RTD\}$.
\end{example}

\begin{example}
We know already that $\AN(P_6) = \AN'(P_6) = 3$. We claim 
$\SMN(P_6) = 2$, which would imply that $\SMN \ra \AN$.
$\SMN(P_6) = 2$ can be shown by matching the empty set with the
empty sample, a singleton set $\{i\}$ by the sample $\{(i,1)\}$,
a co-singleton set $[6] \sm \{i\}$ by the sample $\{(i,0)\}$
and by matching the remaining concepts in $P_6$ by appropriately 
chosen samples of size $2$. We need to argue that such a saturating
matching for the remaining concepts does exist. To this end consider
the bipartite consistency graph (the graph with one vertex for
each concept $c$, one vertex for each labeled sample $S$ and an edge
between them iff $c$ is consistent with $S$. Each concept is consistent 
with $\binom{6}{2} = 15$ samples of size $2$. Each sample of
size $2$ is consistent with $2^4 = 16$ concepts from $P_6$,
including the empty set as well as the singleton and co-singleton sets. 
But, if we exclude these sets, then each sample of size $2$
is consistent with at most $10$ of the remaining concepts.
Now it is an easy application of Hall's theorem to show 
that all remaining concepts can be matched with appropriately
chosen samples of size $2$.
\end{example}

\begin{remark}
We mentioned already in Section~\ref{sec:powerset}  
that $\GMN'(P_n) = n$ is valid for all $n \ge 1$ 
and $\AN(P_n) < 0.23 n$ is valid for all sufficiently 
large $n$. Since $\AN'(P_n) = \AN(P_n)$, it follows 
that $\AN' \ra \GMN'$.
\end{remark}

\noindent
This concludes the verification of the diagram in Fig.~\ref{fig:hierarchy}.

\paragraph{Open Problem.}

We have left open the question of whether $\GMN$ is sub-additive.

%\bibliography{/Users/hanssimon/lernen}
%\bibliography{simon-telle}

\begin{thebibliography}{10}

\bibitem{B2008}
Frank Balbach.
\newblock Measuring teachability using variants of the teaching dimension.
\newblock {\em Theoretical Computer Science}, 397(1--3):94--113, 2008.

\bibitem{FH-OT2022}
{C\'esar Ferri and Jos\'e Hern\'andez-Orallo and Jan Arne Telle}.
\newblock Non-cheating teaching revisited: New probabilistic machine teaching
  model.
\newblock In {\em Proceedings of the 31st International Joint Conference on
  Artificial Intelligence}, pages 2973--2979, 2022.

\bibitem{DFSZ2014}
Thorsten Doliwa, Gaojian Fan, Hans~Ulrich Simon, and Sandra Zilles.
\newblock Recursive teaching dimension, {VC}-dimension and sample compression.
\newblock {\em Journal of Machine Learning Research}, 15:3107--3131, 2014.

\bibitem{FGHHT2024}
Cèsar Ferri, Dario Garigliotti, Brigt Arve~Toppe Håvardstun, Josè
  Hernández-Orallo, and Jan~Arne Telle.
\newblock When redundancy matters: Machine teaching of representations, 2024.
\newblock arXiv2401.12711.

\bibitem{KS2000a}
Norbert Klasner and Hans~U. Simon.
\newblock General lower bounds on the query complexity within the exact
  learning model.
\newblock {\em Discrete Applied Mathematics}, 107(1--3):61--81, 2000.

\bibitem{MSSZ2022}
Farnam Mansouri, Hans~U. Simon, Adish Singla, and Sandra Zilles.
\newblock On batch teaching with sample complexity bounded by {VCD}.
\newblock In {\em Proceedings of the 36th Conference on Neural Information
  Processing Systems (NeurIPS 2022)}, 2022.

\bibitem{SSYZ2014}
Rahim Samei, Pavel Semukhin, Boting Yang, and Sandra Zilles.
\newblock Algebraic methods proving {S}auer's bound for teaching.
\newblock {\em Theoretical Computer Science}, 558:35--50, 2014.

\bibitem{S1972}
N.~Sauer.
\newblock On the density of families of sets.
\newblock {\em Journal of Combinatorial Theory, Series A}, 13(1):145--147,
  1972.

\bibitem{Sh1972}
S.~Shelah.
\newblock A combinatorial problem: Stability and order for models and theories
  in infinitary languages.
\newblock {\em Pacific Journal of Mathematics}, 41:247--261, 1972.

\bibitem{ST2024}
Hans~Ulrich Simon and Jan~Arne Telle.
\newblock {MAP}- and {MLE}-based teaching.
\newblock {\em Journal of Machine Learning Research}, 25(96), 2024.

\bibitem{SHKT2024}
Joakim Sunde, Brigt Håvardstun, Jan Kratochv{\'i}l, and Jan~Arne Telle.
\newblock On a combinatorial problem arising in machine teaching.
\newblock In {\em Proceedings of the 41st International Conference on Machine
  Learning}, pages 47305--47313, 2024.

\bibitem{VC1971}
Vladimir~N. Vapnik and Alexey~Ya. Chervonenkis.
\newblock On the uniform convergence of relative frequencies of events to their
  probabilities.
\newblock {\em Theory of Probability \& its Applications}, 16(2):264--280,
  1971.

\bibitem{ZLHZ2011}
Sandra Zilles, Steffen Lange, Robert Holte, and Martin Zinkevich.
\newblock Models of cooperative teaching and learning.
\newblock {\em Journal of Machine Learning Research}, 12:349--384, 2011.

\end{thebibliography}

\end{document}